\newcounter{braid}
\newcounter{strands}
\def\cross{%
  \@ifnextchar^{\message{Got sup}\cross@sup}{\cross@sub}}
\def\cross@sup^#1_#2{\render@cross{#2}{#1}}
\def\cross@sub_#1{\@ifnextchar^{\cross@@sub{#1}}{\render@cross{#1}{1}}}
\def\cross@@sub#1^#2{\render@cross{#1}{#2}}
\def\render@cross#1#2{
  \def\strand{#1}
  \def\crossing{#2}
  \pgfmathsetmacro{\cross@y}{-\value{braid}*\braid@h}
  \pgfmathtruncatemacro{\nextstrand}{#1+1}
  \foreach \thread in {1,...,\value{strands}}
  {
    \pgfmathsetmacro{\strand@x}{\thread * \braid@w}
    \ifnum\thread=\strand
    \pgfmathsetmacro{\over@x}{\strand * \braid@w + .5*(1 - \crossing) * \braid@w}
    \pgfmathsetmacro{\under@x}{\strand * \braid@w + .5*(1 + \crossing) * \braid@w}
    \draw[braid] \pgfkeysvalueof{/tikz/braid start} +(\under@x pt,\cross@y pt) to[out=-90,in=90] +(\over@x pt,\cross@y pt -\braid@h);
    \draw[braid] \pgfkeysvalueof{/tikz/braid start} +(\over@x pt,\cross@y pt) to[out=-90,in=90] +(\under@x pt,\cross@y pt -\braid@h);
    \else
    \ifnum\thread=\nextstrand
    \else
     \draw[braid] \pgfkeysvalueof{/tikz/braid start} ++(\strand@x pt,\cross@y pt) -- ++(0,-\braid@h);
    \fi
   \fi
  }
  \stepcounter{braid}
}
\tikzset{braid/.style={double=\pgfkeysvalueof{/tikz/braid colour},double distance=1pt,line width=2pt,white}}
\newcommand{\braid}[2][]{%
  \begingroup
  \pgfkeys{/tikz/strands=2}
  \tikzset{#1}
  \pgfkeysgetvalue{/tikz/braid width}{\braid@w}
  \pgfkeysgetvalue{/tikz/braid height}{\braid@h}
  \setcounter{braid}{0}
  \let\sigma=\cross
  #2
  \endgroup
}
\newtheorem{theorem}{Theorem}%[section]
\newtheorem{proposition}[theorem]{Proposition}
\newtheorem{lemma}[theorem]{Lemma}
\def\Z{\mathbb{Z}}
\def\B{\mathbb{B}}
\def\C{\mathbb{C}}
\def\R{\mathbb{R}}
\def\C{\mathbb{C}}
\def\N{\mathbb{N}}
\def\Pi{\mathbb{P}^{\infty}}
\def\qed{\hfill$\square$\medskip}
\def\Zpk{\mathbb{Z}/p^{k}}
\def\Zpk1{\mathbb{Z}/p^{k-1}}
\newcommand{\rref}[1]{(\ref{#1})}
\newcommand{\beg}[2]{\begin{equation}\label{#1}#2\end{equation}}
\def\r{\rightarrow}
\def\sl2{\widetilde{SL_{2}(\Z)}}
\title[Definition and realization of a modular functor]{On the definition \\and K-theory realization \\of a modular functor}
\author{Igor Kriz and Luhang Lai}
\thanks{The first author was supported by NSF grant DMS 1102614.
The second author was supported by a research fellowship from the University of Michigan.}
\begin{document}

\maketitle

\begin{abstract}
We present a definition of a (super)-modular functor which includes certain interesting cases
that previous definitions do not allow. We also introduce a notion of topological twisting
of a modular functor, and construct formally a realization by a 2-dimensional topological field
theory valued in twisted K-modules. We discuss, among other things, the $N=1$-supersymmetric
minimal models from the point of view of this formalism.
\end{abstract}

\section{Introduction}\label{intro}

This paper is being written on the 10th anniversary of the publication of the first author's
paper \cite{spin}, and it grew out of a project of writing a sequel of \cite{spin},
solving certain questions posed there, and also commenting on how the material
relates to some topics of current interest. During the process of writing the paper,
the authors received many comments asking
for examples of their theory. Supplemented with those examples, which required venturing
into other fields of mathematics, the scope of the paper
now exceeds by far what was originally intended.

\vspace{3mm}
The original aim of \cite{spin} was to work out 
the modularity behavior of certain series known in mathematical
physics as partition functions of chiral conformal field theories
with $1$-dimensional conformal anomaly. The author intended to use the outline of
Segal \cite{segal}, and work things out in more mathematical detail. 
However, unexpected complications arose. In correspondence with
P. Deligne \cite{deligne} (see also \cite{deligne1}), the author learned that an
important example of 1-dimensional conformal anomaly, known as the {\em Quillen determinant}
of a Riemann surface,
only satisfies the desired gluing axioms when considered as a 
{\em super-line} instead of just an ordinary complex line. A super-line is the same
thing as a line, but with an additional bit of information, labelling it as either even or odd.
In the coherence isomorphism $L\otimes M\cong M\otimes L$,
if both $L$ and $M$ are odd, a sign of $-1$ is inserted. 
The fact that the determinant is a super-line and not a line has a subtle and profound implication on  
its partition function: the partition function is, in fact, $0$. This phenomenon was
well familiar in mathematical physics, but was somewhat subtle to capture rigorously. 

\vspace{3mm}
The situation 
is even more convoluted in the case of another conformal field theory,
known as the chiral fermion theory of central charge $c=1/2$
on Riemann surfaces with spin structure. While the
conformal anomaly is still ``1-dimensional'' (meaning ``invertible''), it cannot be given consistent signs even when we
use super-lines (\cite{deligne}). The odd spinors turn out to require the use of the non-trivial
element of the super-Brauer group $sBr(\C)\cong \Z/2$, and a consistent theory can be built using
the 2-category $\mathcal{D}_0$ of Clifford algebras over $\C$, graded Morita equivalences and
degree 0 isomorphisms. 

\vspace{3mm}
In \cite{spin}, a rigorous concept 1-dimensional (or invertible) modular functor was introduced
which included the situations mentioned above. In fact, such structures were
classified. An important question left open was how to generalize the concept beyond
the 1-dimensional case. 
Segal \cite{segal} previously outlined a definition of a modular functor
(and coined the term),
but did not discuss the super- or Clifford cases, or, in fact, the coherence diagrams required. 
Examples from mathematical physics are, at least conjecturally, abundant
(just to begin, see \cite{mini, wzw, moore, fermion}). Moreover, super- examples are also abundant,
because many of the examples have supersymmetry (such as
super-minimal models \cite{supermini} or the super-WZW model \cite{swzw}), and supersymmetry
requires super-modular functors. (In this paper, we discuss modular functors which are super-, but
not the stronger condition of super-symmetry, since that requires a rigorous theory of 
super-Riemann surfaces, which still has not been worked out in mathematical detail, cf. \cite{cr}.)
While a number of approaches to rigorous definitions of modular functors and similar structures
were proposed \cite{bk, rt, huang, hk, kf}, the question of the correct definition of 
a super-modular functor, and a Clifford modular functor, remained open. It is solved in
the present paper.

\vspace{3mm}
There were several recent developments which raised interest in this topic, and
related it to a field of homotopy theory, namely K-theory. In \cite{hkk}, D.Kriz, I.Kriz and P.Hu
were considering a generalized (co)-homology version of Khovanov homology,
an invariant in knot theory which is a ``categorification'' of the Jones polynomial. Using
topological field theory methods, \cite{hkk} constructs a stable homotopy version
of Khovanov homology; the construction is an alternative to a previous construction of Lipshitz and
Sarkar \cite{ls}, which used Morse theory. Invariance under the Reidemeister moves used for proving knot
invariance only requires an {\em embedded topological field theory}, which is what makes
a stable homotopy type realization possible. In \cite{hkk},  a more canonical K-theoretical version
is also discussed, which uses a more complete topological quantum field theory; this version is related to 
modular functors. 

\vspace{3mm}
From the point of view of \cite{hkk}, a topological modular functor is, roughly speaking, 
a 2-dimensional topological quantum field theory
valued in 2-vector spaces. The {\em K-theory spectrum} $K$ is an object of stable homotopy theory
analogous to a commutative ring (called an $E_\infty$ ring spectrum). For an $E_\infty$ ring spectrum,
there is a concept of a module. The K-theory realization of
\cite{hkk} constructs, from a topological modular functor, a 2-dimensional topological quantum
field theory valued in $K$-modules. The passage to stable homotopy theory
requires a sophisticated device called {\em multiplicative infinite loop space machine}. While
many versions of such a machine are known to algebraic topologists,
a version which seemed flexible enough for discussing topological field theory was discovered only in 2006
by Elmendorf and Mandell \cite{em}; it used the concept of a {\em multicategory}.
Because of this, a definition of topological  modular functors given in \cite{hkk} uses
multicategories. 

\vspace{3mm}
In the present paper, we give a generalization of the definition from
\cite{hkk} to super- and Clifford modular functors, and construct
their {\em $K$-module realization} extending the construction of \cite{hkk}. 
Mathematical physics, in fact, makes a K-theoretical realization of
modular functors desirable also because of the fact that modular functors also classify ``Cardy branes'' \cite{cardy}, which
correspond to certain ``boundary sectors'' of conformal field theories.
Witten \cite{witten} argued that branes should be classified by $K$-theory, which, 
in mathematics, therefore includes a K-theory realization of modular functors. 

\vspace{3mm}
There was still another clue directly related to the super- and Clifford case: it was noticed \cite{aseg,fht}
that the double bar construction 
(i.e. on both 2-morphisms and 1-morphisms)
of the Deligne 2-category $\mathcal{D}_0$ discussed above gives the classifying
space of geometrical twistings of the $E_\infty$ ring spectrum $K$. In this paper,
we construct this twisting space
{\em as an $E_\infty$-space}. This observation is, in fact, essentially equivalent to constructing
K-theory realizations of the invertible super- and Clifford modular functors discussed in 
\cite{spin}. But it also points to the need of a model of the $E_\infty$ ring spectrum $K$ which would
handle all the elements of the ``Picard group'' $Pic(K)=sBr(\C)$. Specifically, by Bott periodicity, $K$-theory is invariant
under dimensional shift (suspension) by $2$.
The non-trivial element of $sBr(\C)$
should be realized by a single suspension of $K$, and the second ``tensor power'' of this element should
give $K$ again: this phenomenon does not arise, for example, in algebraic K-theory (where Bott periodicity
involves a Tate twist), and hence cannot
be captured by any ``finite'', or algebraic, construction. Fortunately, a model
of K-theory which handles this case was discovered by Atiyah and Singer \cite{as}, (it was also
used in \cite{aseg,fht}). Making the Atiyah-Singer model work in the context of $E_\infty$ ring spectra
requires some additional work, which is also treated in the present paper.

\vspace{3mm}
The paper \cite{fht}
identifies the Verlinde algebra of the WZW model as the equivariant twisted K-theory of
a (say, simply connected) compact Lie group $G$ acting on itself by conjugation. 
It therefore begs the question whether the fixed point spectrum  of the twisted K-theory spectrum
$K_{G,\tau}(G)$ itself is the K-theory realization of the modular functor of the WZW model. 
We address a slightly weaker question here: we show that the twisted $K$-theory realization
depends only on a weaker structure of a {\em projective} modular functor. Using
the machinery of modular tensor categories (as described, say, in \cite{bk}), 
we are able to construct the twisted $K$-theory data from the WZW model, although
a more direct connection would still be desirable. 

In particular, it is important to note tha the twisting comes from the fact that the modular functors
of the WZW models are not topological, but holomorphic. The violation
of topological invariance in a holomorphic modular functor is expressed by a single
numerical invariant called {\em central charge}. In this paper, we extract the
topological information in the central charge in an invariant we call {\em topological
twisting}. It turns out to be a {\em torsion} invariant (it vanishes when the central charge
is divisible by 4). We show that holomorphic modular functors
can be realized as 2-dimensional quantum field theories valued in twisted K-modules,
which is precisely the type of structure present in the Freed-Hopkins-Teleman
spectrum $(K_{G,\tau}(G))^G$. This is interesting because of
speculations about possible connections of 1-dimensional modular functors
with dimensions of elliptic cohomology \cite{bdh,st}: in connection with
those ideas, a torsion
topological invariant associated with central charge was previously conjectured.

\vspace{3mm}
In view of these observations, we felt we should construct non-trivial examples at least of 
{\em projective} modular functors which would utilize the full Clifford generality we introduce.
As a somewhat typical case, we discuss the example of the $N=1$-supersymmetric minimal
models. Getting projective Clifford modular functors
from these examples rigorously is a lot of work. First of all, we
need to identify the additional data on modular tensor categories which allow us to
produce a Clifford modular functor. We treat this in Section \ref{sexamp}, where we
introduce the concept of a {\em Clifford modular tensor category}, and show
how to use it to construct a projective Clifford modular functor. This
construction is general. It also applies to other examples which will be discussed elsewhere.
Then, in Section \ref{svoa}, we gather the relevant facts
from the vertex algebra literature and verify that they fit the formalism. 
In the end, we do associate a projective Clifford modular functor, and hence
a twisted $K$-theory realization, with the $N=1$-supersymmetric minimal model
super-vertex operator algebras.

\vspace{3mm}
The present paper is organized as follows: Multicategories are discussed in Section \ref{multi}.
A definition of a classical (not super- or Clifford) modular functor in the sense of \cite{segal} with 
multicategories is given in Section \ref{smf}. The super- and Clifford version of a modular
functor, which is the main definition
of the present paper, is given in Section \ref{real}. The K-theory realization of topological
modular functors is presented in Section \ref{singer}. The K-theory realization of a
general modular functor, with a discussion of central charge, is given in Section \ref{twist}.
Section \ref{sexamp} contains the concept of a Clifford modular tensor category,
which is a refinement of a modular tensor category which produces a projective Clifford
modular functor. In Section \ref{svoa}, we discuss how this applies to the case of super
vertex algebras, and we discuss concretely the example of the $N=1$-supersymmetric
minimal models.
Section \ref{sapp} is an Appendix, which contains some techincal results 
we needed for technical reasons to construct realization. Notably, this
includes May-Thomason rectification, and a topological version of the Joyal-Street construction.
We also discuss singular vectors in Verma modules over the $N=1$ NS algebra.

\vspace{3mm}

\noindent
{\bf Acknowledgements:} The authors are indebted to Victor Kac for discussions on vertex
algebras, and to Jan Nekov\'{a}\v{r} for additional historical
comments on the Quillen determinant.

\vspace{3mm}

\section{Multicategories}\label{multi}

In this paper, we study among other things the interface between topological field theories
and stable homotopy theory. This requires what is known as {\em multiplicative infinite
loop space theory}, which is a difficult subject to treat rigorously. As in \cite{hkk}, we use the 
approach of Elmendorf and Mandell, based on {\em multicategories}. 

A multicategory \cite{em} is the same thing as a multisorted operad (also known as a colored operad). This means
we are given a set $S$ and for each $n=0,1,2,\dots$ a set $\mathcal{C}(n)$ with a map to
$S^{\times n}\times S$ with a $\Sigma_n$-action fibered over the $\Sigma_n$-action on $S^n$ by
permutation of factors, a composition
$$\mathcal{C}(n)\times_{S^n} (\mathcal{C}(k_1)\times\dots\times \mathcal{C}(k_n)\r
\mathcal{C}(k_1+\dots+k_n)$$
over the identity on $S^{k_1+\dots +k_n}\times S$ and a unit
$$S\r\mathcal{C}(1)$$
over the diagonal map $S\r S\times S$.
These operations are subject to the same commutative diagram as required in the definition of
an operad \cite{operad}.

In a multicategory, we refer to $S$ as the set of {\em objects} and to elements of $\mathcal{C}(n)$
over $(s_1,\dots,s_n,s)$, $s,s_i\in S$, as {\em multimorphisms}
\beg{ee+}{(s_1,\dots,s_n)\r s.}
The set of multimorphisms \rref{ee+} will be denoted by $\mathcal{C}(s_1,\dots,s_n;s)$.
A {\em multifunctor} from a multicategory $\mathcal{C}$ with object set $S$ and $\mathcal{D}$
with object set $T$ consists of a map $F:S\r T$ and maps $\mathcal{C}(n)\r\mathcal{D}(n)$ over
products of copies of $F$, which preserves the composition. 

We will commonly use multicategories {\em enriched} in a symmetric monoidal category $Q$.
This is a variation of the notion of a multicategory where the set of objects $S$ remains a set,
but the multimorphisms $(s_1,\dots,s_n)\r s$ form objects $\mathcal{C}(s_1,\dots,s_n,s)$
of the category $Q$. Multicomposition then takes the form
$$\mathcal{C}(\dots)\otimes \mathcal{C}(\dots)\otimes\dots\mathcal{C}(\dots)\r \mathcal{C}(\dots)$$
where $\otimes$ is the symmetric monoidal structure in $Q$. The unit is a morphism
$$1\r \mathcal{C}(s,s)$$
where $1$ is the unit of $\otimes$. The required diagrams are still the obvious modifications
of diagrams expressing operad axioms. 

In particular, an ordinary multicategory is a multicategory enriched in the category of finite sets,
with the symmetric monoidal structure given by the Cartesian product. Enrichments over the categories
of topological spaces or simplicial sets are so common they are almost not worth mentioning. 
Multicategories enriched in the category of categories (or groupoids) and functors (with the Cartesian
product as symmetric monoidal structure) also occur in 
Elmendorf-Mandell's approach to multiplicative infinite loop space theory. These notions however
admit {\em weak versions}, which are trickier. A weak multicategory (or multifunctor) is a modification of the
respective notion  enriched in categories (or groupoids) where the respective axiom diagrams are
required only to commute up to natural isomorphisms (called {\em coherence isomorphisms}).
These isomorphisms must satisfy certain {\em coherence diagrams}, the precise definition
of which is technical, and will be relegated to the Appendix (Section \ref{sapp}), along with
a construction called {\em May-Thomason rectification} \cite{mt}, which allows us to replace them by
the corresponding strict notions in an appropriate sense. In Section \ref{twist}, we will also
briefly need to discuss weak versions of multicategories and multifunctors enriched in (strict)
2-categories. To complement weak multicategories, we shall call multicategories strictly enriched
in groupoids {\em strict}. Strict multicategories can be converted to multicategories
enriched in topological spaces by taking the nerve (bar construction) on 2-morphisms.
We shall denote this operation by $B_2$.

\vspace{3mm}
It may be good, at this point, to note that weak multifunctors form a weak $2$-category:
A {\em 1-morphism} of two weak multifunctors $\Phi$, $\Psi$ consists
of the following data: for objects $x$, a $1$-morphism $F:\Phi(x)\r\Psi(x)$
and for a $1$-multimorphism $M:(x_1,\dots x_n)\r y$, $2$-isomorphisms
$$\phi:M\circ (F,\dots, F)\cong F\circ M.$$
There are ``prism-shaped'' coherence diagrams required to be formed by these $2$-isomorphisms $\phi$ and
the coherence diagrams of multifunctors. 

A {\em $2$-isomorphism} of $1$-morphisms $F$, $G$ of multifunctors consists of
the following data: for every object $x$, a $2$-isomorphism $F(x)\cong G(x)$ which 
commute with the coherence isomorphisms $\phi$ of the $1$-morphisms $F$, $G$.

Recall further that a {\em weak isomorphism} (or {\em equivalence}) 
between two objects $x,y$ of a weak $2$-category is 
a pair of $1$-morphisms $x\r y$, $y\r x$ whose compositions are $2$-isomorphic to the identities.

\vspace{3mm}
We will also use the notion of {\em $\star$-categories}. A $\star$-category is a multicategory
in which for every $s_1,\dots,s_n\in S$, there exists a {\em universal} multimorphism
$\iota:(s_1,\dots, s_n)\r s_1\star\dots\star s_n$, i.e. for every multimorphism 
$\phi:(s_1,\dots \ s_n)\r t$, there exists a unique morphism $\psi:(s_1\star\dots\star s_n)\r t$
such that $\psi\circ \iota=\phi$ (here we write $\circ$ for the composition in the obvious sense.
The case of $n=0$ is included, we denoted the empty $\star$-product by $\mathbf{1}$.
A {\em $\star$-functor} is a weak multifunctor which preserves the $\star$-product.
(Note that since the $\star$-product is defined by universality, there is no need to discuss
coherences here.)

There is also a corresponding weak version in which $\psi\circ \iota\cong \phi$, and after a choice
of that natural 2-isomorphism, $\iota$ is determined up to unique 2-isomorphism making the resulting
2-diagram commute. The definition of a $\star$-functor remains unchanged.

A symmetric monoidal category with symmetric monoidal structure $\otimes$ determines a
$\star$-category by letting the multimorphisms $(a_1,\dots,a_n)\r a$ be the morphisms
$a_1\otimes\dots\otimes a_n\r a$. The notion of a $\star$-category, however, is more 
general. Let $a_{11},\dots a_{nk_n}$ be objects of a $\star$-category.
Then we have multimorphisms $(a_{i1},\dots,a_{ik_i})\r a_{i1}\star\dots\star a_{ik_i}$.
By the composition property, we then have a multimorphism
$$(a_{11},\dots ,a_{nk_n})\r (a_{11}\star\dots\star a_{1k_1})\star\dots\star (a_{n1}\star\dots\star a_{nk_n}).$$
By universality, we get morphisms
\beg{estar}{a_{11}\star\dots \star a_{nk_n}\r (a_{11}\star
\dots\star a_{1k_1})\star\dots\star (a_{n1}\star\dots\star a_{nk_n})}
and the category is symmetric monoidal when the morphisms \rref{estar} are all isomorphisms
(including the case when some of the $k_i$'s are equal to $0$).

By a {\em weak symmetric monoidal category} we shall mean a weak $\star$-category
in which the $1$-morphisms \rref{estar} are {\em equivalences}, which means that there
exists an inverse $1$-morphism with both compositions $2$-isomorphic to the identity.

\vspace{3mm}
The main purpose of using multicategories in our context comes from the work of Elmendorf and
Mandell \cite{em} who constructed a strict multicategory $Perm$ of permutative
categories, where multimorphisms are, roughly, multilinear morphisms (permutative categories are a version of symmetric monoidal categories where
the operation is strictly associative, see \cite{may}; the Joyal-Street construction \cite{js}, which
we will briefly discuss below in Section \ref{sapp}, allows
us to rectify symmetric monoidal categories into permutative categories).

Elmendorf and Mandell further discuss a {\em realization multifunctor} 
\beg{emultif}{\mathcal{K}:B_2(Perm)\r \mathcal{S}
}
where $\mathcal{S}$ is the topological symmetric monoidal category of symmetric spectra
(\cite{em}). The multifunctor $\mathcal{K}$ is unfortunately not a $\star$-functor. 
In fact, more precisely, the category $Perm$ is a $\star$-category, but only in the weak
sense. In effect, the weak $\star$-product of $n$ permutative categories $C_1,\dots,C_n$
has as objects formal sums
$$(a_{11}\otimes\dots \otimes a_{1n})\oplus\dots\oplus (a_{k1}\otimes\dots\otimes a_{kn})$$
where $a_{ij}\neq 0$, and is freely generated by ``tensor products'' of morphisms in the categories $C_i$
and the required coherences, modulo the coherence diagrams prescribed for multimorphisms in $Perm$.
On the other hand, a strong $\star$-product would require that all objects be of the form
$a_1\otimes\dots a_n$ (because those are the only objects on which the value of the
universal 1-morphism is prescribed exactly); this is clearly impossible except in special cases.

However,
the category $\mathcal{S}$ has a Quillen model structure \cite{ds}, and in particular 
a notion of equivalence; the functor $\mathcal{K}$ is a {\em homotopy $\star$-functor}
in the sense that the map
$$\mathcal{K}(a_1)\star\dots\star\mathcal{K}( a_n)\r \mathcal{K}(a_1\star\dots\star a_n)$$
coming from the multimorphism 
$$(\mathcal{K}(a_1),\dots, \mathcal{K}(a_n))\r \mathcal{K}(a_1\star\dots\star a_n)$$
by the fact that $\mathcal{K}$ is a multifunctor is an equivalence.

\vspace{3mm}

\section{Examples of multicategories. Naive modular functors}\label{smf}

In this section, we shall discuss a number of examples of weak $\star$-categories, and 
will define a modular functor as a weak $\star$-functor between appropriate weak
$\star$-categories (more precisely, a stack version will be needed to express 
holomorphic dependence, but we will discuss this when we get there).

The first kind of weak $\star$-categories which we will use are $1+1$-dimensional cobordism categories.
Recall that if a boundary component $c$ of a Riemann surface $X$ is parametrized by a
diffeomorphism $f:S^1\r c$ then $c$ is called {\em outbound} (resp. {\em inbound})
depending on whether the tangent vector $\iota$ at $1$ in the direction of $i$ is $i$ or
$-i$ times a tangent vector of $X$ on the boundary pointing outside.

The weak $\star$-category $\mathcal{A}^{top}$ has objects finite sets, multimorphisms from $S_1,\;\dots,\;S_n$
to $T$ Riemann surfaces with real-analytically parametrized inbound boundary components labeled by
$S_1\amalg\dots\amalg S_n$ and outbound boundary components labeled by $T$, 
and $2$-morphisms are isotopy classes of diffeomorphisms preserving orientation and
boundary parametrization. The operation $\star$ is, in fact, $\amalg$, and this makes
$\mathcal{A}^{top}$ a weakly symmetric monoidal category.

A variant is the weak $\star$-category $\mathcal{A}$ which has the same with the same
objects and multimorphisms as $\mathcal{A}^{top}$, but with $2$-morphism holomorphic
isomorphism preserving boundary component parametrization. In order for this weak multicategory
to be a weak $\star$-category, we must consider a disjoint union of finitely many copies of $S^1$
a (degenerate) Riemann manifold where the copies of $S^1$ are considered both inbound and outbound
boundary components (parametrized identically). Again, $\mathcal{A}$  is a weak symmetric monoidal category.

\vspace{3mm}
To model 
our concept which approaches most the original outline of Segal's concept of a modular \cite{segal},
consider the weak $\star$-category $\mathcal{C}$ whose objects are finite sets, multimorphisms
$(S_1,\dots, S_n)\r T$ are $T\times (S_1\times\dots\times S_n)$-matrices of finite-dimensional
$\C$-vector spaces, and $2$-morphisms are matrices of isomorphisms of $\C$-vector spaces.

Then, a {\em naive topological modular functor} is a weak $\star$-functor 
$$\mathcal{A}^{top}\r\mathcal{C}.$$
To eliminate the word ``topological'' means to replace $\mathcal{A}^{top}$ with $\mathcal{A}$,
but then we want to include some discussion of holomorphic dependence on the Riemann surface.
Then, there is a slight problem with the degenerate Riemann surfaces. We can,
for example, consider the category of subsets of $\C^n$ (with $n$ varying) which
are of the form $U\cup S$ where $U\subseteq \C^n$ is an open subset, and $S$ is a finite subset of
the boundary of $U$ and continuous maps 
$$U\cup S\r V\cup T$$ 
which are holomorphic on $U$. Consider the Grothendieck topology $\mathcal{G}$
on this category where
covers are open covers. Then consider the stack $\widetilde{\mathcal{A}}$ where sections over $U\cup {S}$ are 
maps continuous maps $f$ from $U\cup S$  into the Teichm\"{u}ller space of Riemann surfaces with
parametrized boundary components (including the degenerate Riemann surfaces)
which are holomorphic on $U$, and morphisms are continuous families of isomorphisms parametrized
over $U\cup S$ which are holomorphic on $U$. Consider also the stack $\widetilde{\mathcal{C}}$
whose sections over $U\cup {S}$ are continuous vector bundles over $U\cup S$ with holomorphic
structure on $U$, and continuous isomorphisms of vector bundles on $U\cup S$, which are
holomorphic on $U$.

Again provisionally, then, a {\em naive modular functor} is a morphism of stacks
$$\widetilde{\Phi}:\widetilde{\mathcal{A}}\r\widetilde{\mathcal{C}}$$
the sections of which over any $U\cup S\in Obj(\mathcal{G})$ are weak multifunctors.
We will continue to denote the sections over a point by
$$\Phi:\mathcal{A}\r\mathcal{C}.$$
The set $S=\Phi(*)$ is called the {\em set of labels}.
One also usually includes the normalization condition that there be a special label $1\in S$
where for the unit disk $D$ (with constant boundary parametrization), one $\Phi(D)(s)$
is $1$-dimensional for $s=1$ and trivial for $1\neq s\in S$.

\vspace{3mm}
It was  P.Deligne \cite{deligne1,deligne} who first discovered that this definition of a modular 
functor is insufficiently general in the sense that it does not include the case of the
Quillen determinant \cite{q}, which was meant to be one of the main examples discussed in
\cite{segal}. For the Quillen determinant, the set of labels has a single element $1$ and
the value of the multifunctor on any $1$-morphism is a $1$-dimensional $\C$-vector
space (called the {\em Quillen determinant line}), but Deligne observed
that in order for the gluing to work (in our language, for the multifunctor axioms to be satisfied),
the Quillen determinant line must be a {\em super-line}, i.e. must be given a $\Z/2$-grading
where a permutation isomorphism switching the factors in the tensor product of two odd lines is $-1$.
He further discovered that if this is allowed, the Quillen determinant line becomes non-canonical
(\cite{spin}). We introduce the machinery necessary to capture that situation in the next
section, but it turns out that the appropriate generality is even greater.

\vspace{3mm}

\section{Clifford algebras and modular functors}\label{real}

Deligne also noticed that the situation is even worse with the invertible chiral fermion (of central charge
$c=1/2$ - see Section\ref{twist} for a more detailed discussion of the central charge)
on Riemann surfaces with spin structure (\cite{spin}). Although this modular functor
is invertible under the tensor product, there is no consistent description in terms of
lines or super-lines, and one must consider irreducible Clifford modules. This leads
to the definitions we make in this section. With the most general definition, we will then
construct the K-theory realization.

\vspace{3mm}
\noindent
{\bf Remark:} A somewhat confusing
aspect of the chiral fermion is that there also exists a naive chiral fermion modular functor (of central
charge $c=1/2$) which has three labels \cite{fermion} and therefore is not invertible.
That example is of lesser significance to us, and will not be discussed further.
%The relationship between both chiral fermion models is still not completely understood.

\vspace{3mm}
Let us recall that a {\em spin structure} on a Riemann surface $X$ (with boundary) is 
a square root of the tangent bundle $\tau$ of $X$, i.e. a complex holomorphic line bundle
$L$ together with an isomorphism $L_X\otimes_\C L_X \cong \tau_X$. A {\em spin-structure} on a
real $1$-manifold $Y$  is a real line bundle $L_Y$ together with an isomorphism
$L_Y\otimes_\R L_Y\cong \tau_Y$. It is important that a Riemann surface $X$ with spin structure
and with boundary canonically induces a spin structure on the boundary $\partial X$:
Let $L_{\partial X}$ consist of those vectors of $L_X|_{\partial X}$ whose
square is $i$ times a tangent vector of $X$ perpendicular to the boundary and pointing outside.

Recall that $S^1$ has two spin structures called {\em periodic} and {\em anti-periodic},
depending on whether the bundle $L_{S^1}$ is trivial or a M\"{o}bius strip.
The induced spin structure on the boundary of a disk is antiperiodic.
When parametrizing a boundary component $c$ of a Riemann surface with spin structure (we will,
again, restrict attention to real-analytic parametrizations), it is appropriate for our
purposes to specify a {\em parametrization with spin}, i.e. a diffeomorphism
$f:S^1\r c$ where $S^1$ is given a spin structure, together with an isomorphism
$L_{S^1}\r L_{c}$ over $f$, which squares to $Df$.

Now there are multicategories $\mathcal{A}_{spin}$ and $\mathcal{A}_{spin}^{top}$
whose objects are sets $S$ with a map to $\{A,P\}$, standing for ``periodic'' and ``antiperiodic'' 
(the inverse images of $A$, $P$ will be denoted by
$S_A$, $S_P$). $1$-multimorphisms $(S_1,\dots,S_n)\r T$ are Riemann surfaces with spin
with parametrized boundary components with
inbound resp. outbound boundary components indexed by $S_1\amalg\dots\amalg S_n$ 
resp. $T$, with matching spin structures. $2$-isomorphisms in $\mathcal{A}_{spin}$ are holomorphic isomorphisms $f$
with spin (i.e. with given square roots of $Df$) which is compatible with boundary parametrizations.
$2$-isomorphisms in $\mathcal{A}_{spin}^{top}$ are isotopy classes of diffeomorphisms with spin: 
for this purpose, it is more helpful to interpret spin structure equivalently as an 
$\widetilde{SL_2(\R)}$-structure on the tangent bundle where $\widetilde{SL_2(\R)}$ is
the double cover of $SL_2(\R)$. Then a diffeomorphism with spin is defined as a diffeomorphism
over which we are given a map of the associated principal $\widetilde{SL_2(\R)}$-bundles.

Again, in the case of $\mathcal{A}_{spin}$, 
$S^{1}_{A}$  and $S^{1}_{P}$, which are copies of $S^1$ with either spin structure,
must be considered to be (degenerate) Riemann surfaces with spin structure, where both
inbound and outound boundary components are the same, with identical parametrizations
(including identical spin). Then we can form a stack $\widetilde{\mathcal{A}_{spin}}$
analogously to the stack $\widetilde{\mathcal{A}}$ in the previous section, over the
same Grothendieck topology.

\vspace{3mm}
Based on ideas of P.Deligne \cite{deligne, spin}, to capture examples such as the invertible chiral
fermion, we introduce the weak $\star$-category $\mathcal{D}$ whose objects are data of the form $(S,A_s)$
where for every $s\in S$, $A_s$ is a super-central simple algebra, 
$1$-multi-morphisms $((S_1,A_s),\dots (S_n,A_s))\r (T,B_t)$ are 
$T\times (S_1\times\dots \times S_n)$-matrices, the $(t,(s_1,\dots, s_n))$ entry
being a 
$(B_t,A_{s_1}\otimes\dots\otimes A_{s_n})$-bimodule, and $2$-isomorphisms are
graded isomorphisms of bimodules.

Here, super-central simple algebras over $\C$ can be defined intrinsically, but for our purposes
we may define them as {\em Clifford algebras}, i.e. $\Z/2-graded$ algebras graded-isomorphic
to $\C$-algebras of the form $C_n=\C[x_1,\dots,x_n]/(x_{j}^{2}-1,x_jx_k+x_kx_j ,j\neq k)$, where the
degrees of the generators $x_j$ are odd. A Clifford algebra
is {\em even} (resp. {\em odd}) depending on whether $n$ is even or odd.
It is important to recall the {\em graded tensor product} of algebras or (bi)modules.
This is the ordinary tensor product, but the interchange map $T:M\otimes N\r N\otimes M$
is defined by
\beg{ekoszul}{T(x\otimes y)=(-1)^{deg(x)deg(y)}y\otimes x
}
on homogeneous elements $x\in M$, $y\in N$. This definition is applied in defining the $\Sigma_n$-action
on $1$-multi-morphisms, and also when making, for an $A$-module $M$ and $B$-module $N$,
$M\otimes N$ an $A\otimes B$-module. The same applies to bimodules, and is used in
defining the composition of $1$-multimorphisms in $\mathcal{D}$.

Similarly as in the last section, we have a stack $\widetilde{D}$ over the Grothendieck topology $\mathcal{G}$
where sections over $U\cup S$ are matrices of holomorphic bundles of 
$(B_t,A_{s_1}\otimes\dots\otimes A_{s_n})$-bimodules (for some
$(S_1,A_s),\dots (S_n,A_s), (T,B_t)$), which are interpreted as holomorphic principal bundles
with structure group 
$$\prod GL_{m(s_1,\dots,s_n,t)}(B_t\otimes (A_{s_1}\otimes\dots\otimes A_{s_n})^{Op})$$
where $m(s_1,\dots,s_n,t)$ are fixed non-negative integers.

We now define a {\em topological modular functor} as a $\star$-functor 
$$\mathcal{A}_{spin}^{top}\r\mathcal{D}.$$
Analogously to the last section, a {\em modular functor} is a morphism of stacks
$$\widetilde{\Phi}:\widetilde{\mathcal{A}_{spin}}\r\widetilde{\mathcal{D}}$$
the section of which over an object of $\mathcal{G}$ form a $\star$-functor.
The sections over a point will still be denoted by
$$\Phi:\mathcal{A}_{spin}\r \mathcal{D}.$$
The data $(S_A,A_s)=\Phi(*\mapsto A)$, $(S_P,A_s)=\Phi(*\mapsto P)$ are again 
referred to as sets of antiperiodic and periodic labels (decorated with Clifford algebras).
Again, one may include a normalization condition that there exists exactly one label
$(1,\C)$ on which the module $\Phi(D)$ where $D$ is the outbound unit disk $1$-multimorphism
is $1$-dimensional even, while on the other labels it is $0$.

\vspace{3mm}
\noindent
{\bf Example:} The {\em chiral fermion} of central charge $c=1/2$ which was considered
in \cite{spin} is an example of a modular functor in the sense just defined. This is
a theorem of \cite{spin}. In fact, this modular functor is {\em invertible} in the
following sense:

There is an operation of a tensor product of modular functors;
we call a modular functor $\Phi$ {\em invertible} if there exists a modular functor $\Psi$
such that $\Phi\otimes \Psi\sim 1$ where $1$ is the modular functor with one antiperiodic
and one periodic label, and all applicable $1$-multimorphisms going to $\C$. Here $\sim$
denotes a weak isomorphism of multifunctors covered by an equivalence of stacks. 

Invertible modular functors $(\widetilde{\Phi},\Phi)$ can be characterized as those
for which $\Phi$ factors as
$$\Phi_0:\mathcal{A}_{spin}\r \mathcal{D}_0\subset \mathcal{D},$$
where $\mathcal{D}_{0}$ is the sub- weak multicategory of $\mathcal{D}$ whose
objects are of the form $*\mapsto A$ for a Clifford algebra $A$, $1$-multimorphisms
are ($1\times $-matrices of) Morita equivalences, and $2$-isomorphism
are isomorphisms of bimodules. Recall that a {\em Morita equivalence} is a
graded $A,B$-bimodule $M$ such that $M\otimes_B ?$ (we use the symbol $?$ to denote an
unnamed variable) is an equivalence of
categories between finitely generated $B$-modules and finitely generated $A$-modules.

The weak multi-category $\mathcal{D}_0$ will play a role in the next section, in connection
with twistings of K-theory. In fact, invertible modular functors were completely classified
in \cite{spin}. In particular, it was proved there that all invertible modular functors are
weakly isomorphic to tensor products of tensor powers of the chiral fermion, and topological
modular functors. 

\vspace{3mm}
{\bf Remark:} At least conjecturally, there should be a large number of examples
which use the full generality of modular functors as defined here. For example,
supersymmetric modular functors, such as $N=1$ and $N=2$-supersymmetric
minimal models \cite{supermini}, which are irreducible representations of
of certain super-algebras containing the Virasoro algebra, are almost certainly
modular functors in our sense (and because of the super-symmetry, require the full scope of our formalism).

In this paper, we will discuss the case of the $N=1$ supersymmetric
minimal models, and will show that they give rise to at least a projective Clifford modular functors.

We do not discuss supersymmetry in this paper. One reason is that it requires some
work on super-moduli spaces of super-Riemann surfaces, which still has not been done
rigorously; the best reference available is the outline due to Crane and Rabin \cite{cr}.

\vspace{3mm}

\section{The Atiyah-Singer category and K-theory realization} \label{singer}

In this section, we will describe how one can extract K-theory information out of a modular
functor. The strategy is to construct a suitable weak $\star$-functor
\beg{ekk}{\mathcal{D}\r Perm,}
which, composed with the Elmendorf-Mandell multifunctor \rref{emultif}, would produce
a homotopy $\star$-functor
$$B_2\mathcal{D}\r\mathcal{S},$$
which could be composed with a topological modular functor to produce a functor
\beg{ekk1}{B_2\mathcal{A}_{spin}^{top}\r \mathcal{S}.}
(In case of modular functors, the superscript $top$ would be dropped.)

There is, in fact, an obvious construction in the case of naive modular functors: We may
define a weak $\star$-functor
\beg{ekk2}{\mathcal{C}\r Perm
}
on objects by 
$$S\mapsto \prod_S \C_2$$
where $\C_2$ is the category of finite-dimensional $\C$-vector spaces and isomorphisms
(topologized by the analytic topology on morphisms).
A $1$-multimorphism is then sent to the functor given by ``matrix multiplication'' (with respect
to the operations $\oplus$ and $\otimes$) by the given matrix of finite-dimensional vector
spaces. It is clear how matrices of $2$-isomorphisms correspond to natural isomorphisms
of functors. 

As mentioned above, composing with \rref{emultif}, we get a multifunctor
\beg{erealcs}{B_2\mathcal{C}\r \mathcal{S},}
but using the following trick of Elmendorf and Mandell, we can in fact improve this, getting a functor
into {\em $K$-modules} where $K$ denotes the $E_\infty$ ring spectrum of periodic K-theory 
(in fact, in the present setting, if we dropped the topology on
morphisms of $\C_2$,  it could just as well be the $E_\infty$ ring spectrum of
algebraic K-theory of $\C$, which enjoys an $E_\infty$ map into $K$).

Let $Q$ be any multicategory.
Consider a multicategory $\overline{Q}$ which has the objects of $Q$
and one additional object $*$. There is one multimorphism 
\beg{estars}{(\underbrace{*,\dots,*}_n)\r *}
for each $n$ and for every multimorphism
$$(a_1,\dots,a_n)\r b$$
in $Q$, a single multimorphism
\beg{estars1}{ (*,\dots,*, a_1,*,\dots,*,\dots a_n,*,\dots,*)\r b}
for every fixed numbers of $*$'s inserted between the $a_i$'s. Composition is obvious.
Similar constructions obviously also apply to enriched and weak multicategories.

Now if we have a weak multifunctor $\overline{Q}\r Perm$, then its restriction to 
the category with a single objects $*$ and multimorphisms \rref{estars}
realizes to an $E_\infty$ ring spectrum $R$, and the restriction weak multifunctor
$Q\r Perm$, which realizes to a multifunctor
$$B_2Q\r \mathcal{S},$$
is promoted to a multifunctor
$$B_2Q\r R-modules$$
(using the ``strictification'' Theorem 1.4 of \cite{em}).

In the case of \rref{ekk2}, we may define a weak multifunctor
\beg{ekk2a}{\overline{\mathcal{C}}\r Perm
}
simply by sending $*$ to $\C_2$. The 1-morphisms \rref{estars}, \rref{estars1} 
are sent simply to tensors with the vector spaces corresponding to the $*$ copies.
In this case, $R$ is connective K-theory $k$. Thus,  we can promote \rref{erealcs}
to a homotopy $\star$-functor to the multicategory of $k$-modules. By localizing with respect to the Bott
element (\cite{ekmm}), we can further pass from $k$-modules to $K$-modules.

\vspace{3mm}

This suggests to construct the weak multifunctor \rref{ekk} directly analogously
to \rref{ekk2}, i.e. to let 
$$ (S,A_s)\mapsto \prod_S A_s-Mod$$
where $A_s-Mod$ is the permutative category of finitely generated graded $A_s$-modules and graded
isomorphisms of modules. Indeed, this does produce a weak multifunctor of the form
\rref{ekk}, but {\em this is the wrong construction}; it does not,
for example, restrict to \rref{ekk2} (note that $\mathcal{C}$ is a sub- weak multicategory
of $\mathcal{D}$). In fact, permutative category of finitely generated graded $\C$-modules
is equivalent to the product of {\em two copies} of $\C_2$.

There is a good heuristic argument why no {\em finite-dimensional} construction of a multifunctor
\rref{ekk} can possibly be what we want. It relates to the weakly symmetric monoidal category
$\mathcal{D}_0$ which was discussed in the last section: The $E_\infty$ symmetric
monoidal category $B_2\mathcal{D}_0$ has two objects $\{even,odd\}$ (with the expected
product), and with automorphism groups of homotopy type $\Z/2\times K(\Z,2)$; it can
be interpreted as $B_2$ of the category of super-lines and isomorphisms (with the analytic
topology). This
suggests that the $E_\infty$ space $B(B_2\mathcal{D}_0)$ is a geometric model of the space of {\em twistings}
of $K$-theory as considered in \cite{fht}. We will, in fact, be able to make that more precise
below. 

For now, however, let us look at the chiral fermion modular functor example. If we want to construct
a realization of this into a weak $\star$-functor into $K$-modules, then the periodic label,
which the modular functor sends to an odd Clifford algebra, should be twisted by a shift of
dimension by $1$, i.e. it should be weakly equivalent to the $K$-module $\Sigma K$.
Therefore, the homotopy $\star$-functor 
$$B_2\mathcal{D}_0\r K-Mod$$
uses in substantial ways the relation
$$\Sigma K\wedge_K \Sigma K\sim K,$$
which is Bott $2$-periodicity. This clearly indicates that the construction cannot have a direct
algebraic K-theory analog, in which Bott periodicity is only valid with a Tate twist.

\vspace{3mm}
Therefore, one must bring to bear the full machinery of topological K-theory. The most convenient
model for this purpose seems to be a minor modification of the construction of Atiyah and Singer \cite{as}.
Let $C$ be a Clifford algebra (over $\C$). By a {\em Hilbert $C$-module} we shall mean a
$\Z/2$-graded complex (separable) Hilbert space $H=H_{even}\oplus H_{odd}$ together with
a morphism of graded $\Z/2$-graded $C^*$-algebras $C\r B(H)$ where $B(H)$ is the $\Z/2$-graded
$C^*$-algebra of bounded linear operators on $H$. (Recall that the canonical involution on
$C$ sends $x\mapsto x$ for $x$ even and $x\mapsto -x$ for $x$ odd.)

Now we shall define a symmetric monoidal category $\mathcal{F}(C)$ in which both
the sets of objects and morphisms are topologized; some basic facts about such categories,
including the Joyal-Street construction (making them into permutative categories)
will be discussed in the Appendix (Section \ref{sapp}).

The space $Obj(\mathcal{F}(C))$ is a disjoint union over (finite or infinite-dimensional)
Hilbert $C$-modules $H$ of spaces $\mathcal{F}(H)$ defined as follows:
When $C$ is even, $\mathcal{F}(H)$ consists of all homogeneous odd skew self-adjoint Fredholm operators
$F:H\r H$ which anticommute with all odd elements of $C$. When $C$ is odd, 
$\mathcal{F}(H)$ consists of all homogeneous odd skew self-adjoint Fredholm operators
$F:H\r H$ which anticommute with all odd elements of $C$ such that $iF$ is neither positive definite
nor negative definite on any subspace of finite codimension. (Note that in the odd case,
this in particular excludes the possibility of $H$ being finite-dimensional.) In both cases, the topology on
$\mathcal{F}(H)$ is the induced topology from $B(H)\times K(H)$ via the map 
$F\mapsto (F, 1+F^2)$ where $B(H)$ is given the weak topology and $K(H)$ is the
space of compact operators on $H$ with the norm topology. 
(At this point, we could equivalently just use the norm topology, but the more refined topology
described above, which is due to Atiyah-Segal \cite{aseg} is needed when considering 
the stack version of the multifunctor \rref{ekk} which we are about to define.)

The space $Mor(\mathcal{F}(C))$ is 
a disjoint union over pairs $(H,K)$ of Hilbert $C$-modules of the spaces
$$\mathcal{F}(H)\times Iso(H,K)$$
where $Iso(H,K)$ is the space of metric isomorphisms of Hilbert $C$-modules
with the norm topology. (Recall that when $H,K$ are infinite-dimensional, then $Iso(H,K)$ is
contractible by Kuiper's theorem.) 

Now the category $\mathcal{F}(C)$ is symmetric monoidal with the operation of direct sum $\oplus$.
By a theorem of Atiayh and Singer \cite{as}, in fact, the spectrum associated with the symmetric
monoidal category $\mathcal{F}(C)$ is $k$ when $C$ is even and $\Sigma k$ when $C$ is odd.
(Thus, localizing at the Bott element produces the spectra we need.)

The weak $\star$-functor \rref{ekk} can now be constructed as follows: On objects,
we put
$$(S,A_s)\mapsto \prod_{s\in S} \mathcal{F}(A_s).$$
On 1-morphisms, we let a multimorphism $((S_1, A_s),\dots,(S_n,A_s))\r (T,B_t)$
given by a matrix of bimodules $M_{t,(s_1,\dots,s_n)}$ send an $n$-tuple
$$((H_s, F_s|s\in S_1),\dots,(H_s,F_s|s\in S_n))$$
of pairs consisting of a Hilbert space and skew self-adjoint Fredholm operator
to the $n$-tuple indexed by $t$ of Hilbert $A_t$-modules
$$\bigoplus_{s_i\in S_i} M_{t,(s_1,\dots, s_n)}\otimes_{A_{s_1}\otimes\dots \otimes 
A_{s_n}}H_{s_1}\widehat{\otimes}\dots\widehat{\otimes} H_{s_n}$$
(where $\widehat{\otimes}$ is the Hilbert tensor product) with 
the corresponding ``product of skew self-adjoint Fredholm operators'', as defined in \cite{as}.
This completes the definition of \rref{ekk}, and hence, in particular, of \rref{ekk1} for
an arbitrary topological modular functor. 

We may again use the Elmendorf-Mandell trick
to extend the weak multifunctor \rref{ekk} to $\overline{\mathcal{D}}$ by sending
$*$ to $C_2$; combining with localization at the Bott element, we promote \rref{ekk1} to
a weak $\star$-functor
$$B_{2}\mathcal{A}_{spin}^{top}\r K-Mod,$$
which, for topological modular functors, is what we were asking for. For a general modular functor,
we get, of course, the same thing with $\mathcal{A}_{spin}^{top}$ replaced by
$\mathcal{A}_{spin}$, but that is somewhat unsatisfactory. We may, in fact, 
consider a Hilbert bundle version (following the lines of \cite{fht}) of the construction to obtain a sheaf version,
but on the $K$-module side, we do not know how to preserve the holomorphic information, so
the sections of 1-morphisms over a space $Y$ will be modules over $Map(Y,K)$. In addition
to losing holomorphic information,
this will only be a presheaf of spectra, satisfying the sheaf condition up to homotopy. 
Therefore, we clearly want to say something better for modular functors which are not topological.
We will address that in the next section.

\vspace{3mm}
To conclude the present section, let us note that by restricting 
the functor \rref{ekk} to $\overline{\mathcal{D}}$, 
and then restricting to $\overline{\mathcal{D}_0}$, we obtain
a homotopy $\star$-functor 
$$B_2(\mathcal{D}_0)\r K-modules.$$
This is one model of the ``action'' of the $E_\infty$ space $B(B_2\mathcal{D}_0)$ on
the category of $K$-modules, as mentioned above.

\vspace{3mm}

\section{Topological twisting and remarks on classification}\label{twist}

In this section, we will address the question how to extract topological 
information from a (not necessarily topological) modular functor. Let $\widetilde{\mathcal{D}}_{proj}$
be the {\em projective version} of the stack $\widetilde{\mathcal{D}}$, i.e.
sections over $U\cup S$ are principal bundles (holomorphic over $U$)
with structure group equal to the product
over the individual matrix entries
of the (topological) automorphism groups of the respective Clifford modules, factored out
by $\C^\times$, acting (on all the matrix entries simultaneously) by scalar multiplication.
We see that the section of $\widetilde{\mathcal{D}}_{proj}$ over an object of $\mathcal{G}$
form a multicategory.

Note also that $\mathcal{A}_{spin}^{top}$ can also be promoted to a stack
$\widetilde{\mathcal{A}_{spin}^{top}}$ which is simply equivalent to the quotient stack
$[*/\Gamma]$ where $\Gamma$ is the appropriate mapping class group.
We have then canonical projections of stacks
$$p:\widetilde{\mathcal{D}}\r \widetilde{\mathcal{D}}_{proj}$$
and
$$q:\widetilde{\mathcal{A}_{spin}}\r\widetilde{\mathcal{A}_{spin}^{top}}.$$
The key observation is the following result due to G. Segal \cite{segal}:

\begin{lemma}\label{l1}
Consider a modular functor $\Phi$ (with the normalization condition), 
there is a canonical morphism of stacks completing the following diagram:
\beg{estacks}{
\diagram
\widetilde{\mathcal{A}_{spin}}\rto^{\widetilde{\Phi}} \dto_q &\widetilde{\mathcal{D}}\dto^p\\
\widetilde{\mathcal{A}_{spin}^{top}}\rdotted|>\tip_{\widetilde{\Phi_{proj}}}& \widetilde{\mathcal{D}}_{proj}
\enddiagram
}

\end{lemma}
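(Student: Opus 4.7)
The plan is to construct $\widetilde{\Phi_{proj}}$ locally by applying $p \circ \widetilde{\Phi}$ to holomorphic lifts of a topological family of spin surfaces, and to glue these local constructions via the projective flatness of the modular-functor bundle over Teichm\"{u}ller space. Explicitly: given a section $\sigma \in \widetilde{\mathcal{A}_{spin}^{top}}(U \cup S)$, choose a cover of $U \cup S$ by small opens $V_\alpha$, and on each $V_\alpha$ pick a continuous map to the Teichm\"{u}ller space of Riemann surfaces with spin and parametrized boundary (holomorphic on $V_\alpha \cap U$) lifting the underlying topological family. This gives $\tilde\sigma_\alpha \in \widetilde{\mathcal{A}_{spin}}(V_\alpha)$; set $\widetilde{\Phi_{proj}}(\sigma)|_{V_\alpha} := p(\widetilde{\Phi}(\tilde\sigma_\alpha))$.

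On overlaps $V_\alpha \cap V_\beta$ the lifts $\tilde\sigma_\alpha, \tilde\sigma_\beta$ have the same underlying topological data but, in general, distinct complex structures, and so are not isomorphic in $\widetilde{\mathcal{A}_{spin}}$. To obtain canonical transition isomorphisms in $\widetilde{\mathcal{D}}_{proj}$ I invoke the projective flatness observation due to Segal: for a fixed topological type of spin surface with parametrized boundary, the multifunctor axioms together with the normalization condition endow the holomorphic bundle (matrix of bundles) $\mathcal{V}$ of Clifford modules that $\widetilde{\Phi}$ produces over the appropriate Teichm\"{u}ller space $T$ with a canonical flat connection on its projectivization $p(\mathcal{V})$; the obstruction to lifting this to an honest flat connection on $\mathcal{V}$ itself is the central charge, to be treated in the next section. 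Since $T$ is contractible, parallel transport in $p(\mathcal{V})$ yields a canonical, path-independent isomorphism between $p(\widetilde{\Phi}(\tilde\sigma_\alpha))$ and $p(\widetilde{\Phi}(\tilde\sigma_\beta))$ on overlaps, and flatness forces the cocycle condition on triple overlaps.

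Patching via these canonical transition isomorphisms yields a well-defined section $\widetilde{\Phi_{proj}}(\sigma) \in \widetilde{\mathcal{D}}_{proj}(U \cup S)$, independent of the choice of cover and lifts (any two choices are compared over a common refinement by a further application of parallel transport). Commutativity of the square is then immediate: over the essential image of $q$ one may take the supplied holomorphic lift and no transport is needed. Functoriality in $\sigma$, compatibility with 1-multimorphisms, and the stack-descent condition all follow from naturality of parallel transport combined with the multifunctor structure of $\widetilde{\Phi}$ and the fact that the flat connection on $p(\mathcal{V})$ is itself compatible with gluing of surfaces along boundary components.

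The main obstacle is the construction of the projective flat connection in the second paragraph. This is exactly the content of the lemma — everything else is a patching argument — and it is extracted from the holomorphic dependence of $\widetilde{\Phi}$ on the Teichm\"{u}ller parameter together with the gluing/factorization properties imposed by the multifunctor axioms, which force the infinitesimal variation of $\widetilde{\Phi}$ in the complex-structure direction to act on fibers by operators determined up to a global scalar (the precise locus of the central charge).
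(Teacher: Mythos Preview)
Your outline correctly identifies the shape of the argument --- construct locally, glue via a canonical projective identification --- but the crucial step, the construction of the projective flat connection/trivialization, is asserted rather than supplied. You write that ``the multifunctor axioms together with the normalization condition endow the holomorphic bundle \dots\ with a canonical flat connection on its projectivization'' and later that ``gluing/factorization properties \dots\ force the infinitesimal variation \dots\ to act on fibers by operators determined up to a global scalar.'' This is the statement to be proved, not a proof of it; you have not explained \emph{which} gluing is invoked, nor why the normalization condition makes the resulting identification a scalar.

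The paper's argument supplies exactly this missing mechanism, and it is concrete rather than infinitesimal. Given a Riemann surface $X$ (with spin, parametrized boundary), cut out a small holomorphic disk; this produces $X'$ with an extra boundary circle. By the normalization condition, only the unit label on the new cut contributes, so the gluing isomorphism gives $\Phi(X)\cong \Phi(X')\otimes \Phi(A)$ where $A$ is an annulus with unit label on one side. Since $\Phi(A)$ is a \emph{line} (one-dimensional), this tensor factor is a scalar and the isomorphism is canonical after projectivization. Varying the annulus over a parametric family $U\cup S$ then trivializes $p\widetilde{\Phi}$ projectively. Independence of the choice of disk is checked by a pair-of-pants argument: a pair of pants with unit label on all three boundary components also has one-dimensional $\Phi$, so the two annulus-trivializations obtained from two different disk-cuts agree projectively. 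This is Segal's actual argument, and it is what your last paragraph needs to become.
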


\vspace{3mm}

\noindent
{\em Proof sketch:} 
The idea is to cut out a small disk from a Riemann surface with boundary,
and glue in an annulus (which can vary along a parametric set $U\cup S$).
Then the modular functor takes on a non-zero value only for the unit label on the
cut, and its value on the annulus (which must have the same labels on both
boundary components) is also $1$-dimensional. The gluing
isomorphism then establishes a projective trivialization
of the matrix of bundles given by $\widetilde{\Phi}$ on the given section of 
$\widetilde{A}_{spin}$ over $U\cup S$. 

To prove consistency, we must show that the projective trivialization constructed
does not depend on the choice of the holomorphic disk we cut out. To this end, consider
a pair of pants with unit label on all boundary components. By the gluing isomorphism,
again, the value of the modular functor on the pair of pants is $1$-dimensional. 
This shows that the projective trivializations obtaining by gluing annuli on either
of the boundary components of the pair of pants coincide.

\qed

\vspace{3mm}
In diagram \rref{estacks}, since the source of the bottom row is topological, we can drop
the $\widetilde{?}$, i.e. it suffices to consider the sections over a point
$$\Phi_{proj}:\mathcal{A}_{spin}^{top}\r\mathcal{D}_{proj}.$$
But how to realize this data topologically? While we could factor out $\C^\times$ from the morphisms
of the categories $\mathcal{F}(C)$, (which is clearly related to twisted K-theory),
those categories are no longer symmetric monoidal, so it is not clear how to apply
the infinite loop space machine of Elmendorf and Mandell.

To remedy this situation, we shall, instead of factoring out the $\C^\times$ from the 
morphisms, add it as 3-morphisms to the target multicategory. More specifically, 
we consider a weak multicategory $\overline{\mathcal{D}}$
strictly enriched in groupoids, by which we mean a structure satisfying 
the axioms of a weak multicategory where the $2$-morphisms between
two $1$-multimorphisms form a groupoid, and composition is functorial. 
The definition of $\overline{\mathcal{D}}$ is the same as the definition
of $\mathcal{D}$ with an added space of $3$-morphisms which is the
Cartesian product of the space of $2$-morphisms and $\C^\times$: 
An element $\lambda\in \C^\times$ acts on a $2$-morphism by scalar multiplication.

\vspace{3mm}
We also introduce the concept of a {\em 2-weak multifunctor} 
$$Q\r W$$
where $Q$ is a weak multicategory and $W$ is a weak multicategory strictly enriched
in groupoids in the above sense. This the weak version of the concept of a weak
multifunctor, considered as a weak morphism of multisorted algebras of operad type
with objects and $1$-morphisms fixed,
as considered below in Section \ref{sapp}. In other words, the $2$-morphisms
satisfy the axioms of a weak multifunctor where every equality of 2-morphisms
prescribed by that structure is replaced by a 3-isomorphism. $3$-isomorphisms are
then required to satisfy coherence diagrams corresponding to situations where
one operation on $2$-morphisms in the concept of a weak multifunctor can be
converted to another by a sequence of relations required by the structure
in two different ways.

From this point of view, Lemma \ref{l1} gives a 2-weak multifunctor
\beg{e2weak}{\overline{\Phi}:\mathcal{A}_{spin}^{top}\r \overline{\mathcal{D}}.
}
Note that the data \rref{e2weak} are completely topological! We will refer to 
a 2-weak multifunctor \rref{e2weak} as a {\em projective modular functor}.

To construct a topological realization of \rref{e2weak}, let $C$ be a Clifford algebra.
We construct a category strictly enriched in groupoids $\overline{\mathcal{F}(C)}$
to have the same objects and 1-morphisms as $\mathcal{F}(C)$, and we let the space of $2$-morphisms
be the Cartesian product of the space of $1$-morphisms with $\C^\times$; the 2-morphisms
act, again, by scalar multiplication.

Then $\overline{\mathcal{F}(C)}$ is not a symmetric monoidal category enriched
in groupoids: there is no way of adding two different $2$-morphisms. Consider instead
the strict $2$-category $O^2\C^\times$ whose spaces of objects and $1$-morphisms are $*$,
and the space of 2-morphisms is $\C^\times$. Then there is an obvious ``forgetful'' strict
2-functor 
\beg{aweakU}{U: \overline{\mathcal{F}(C)}\r O^2\C^\times,}
which gives $\overline{\mathcal{C}}$ the structure of a symmetric monoidal category strictly
enriched over groupoids over $O^2\C^\times$. Recall that a symmetric monoidal category $H$
over a category $K$ is a functor $H\r K$, a unit $K\r H$ and a product 
$\oplus: H\times_K H\r H$ which satisfy the usual axioms of a symmetric monoidal
category. The version strictly enriched in groupoids is completely analogous.
Further, the (topological) Joyal-Street construction allows us to rectify each
symmetric monoidal category over a category into a permutative category over a category
(which means that the symmetric monoidal structure is strictly associative unital), and
similarly for the version strictly enriched in groupoids. Analogously to the theorem
of Elmendorf and Mandell \cite{em}, we then have a strict multicategory 
$$Perm/Cat$$
of permutative categories over categories, and the corresponding version strictly enriched in
groupoids
$$(2-Perm)/(2-Cat).$$

Now we may construct from \rref{e2weak} a weak multifunctor strictly enriched
in groupoids
\beg{e3weak}{\overline{\mathcal{D}}\r (2-Perm)/(2-Cat)}
simply by the same construction as we used for
\rref{ekk}, where on the level of $3$-morphisms, we define composition by multiplication 
in $\C^\times$. Using $\overline{\Phi}$, we then obtain a 2-weak multifunctor
\beg{e4weak}{\mathcal{A}_{spin}^{top}\r (2-Perm)/(2-Cat).
}
Using the techniques described in Section \ref{sapp},
this can be rectified into a weak multifunctor strictly enriched in groupoids, so 
rectifying the $2$-level and applying $B_3$,
we get a weak multifunctor
\beg{e5weak}{\mathcal{A}_{spin}^{top}\r Perm/Cat.
}
Using the machine of Section \ref{sapp} again, we can convert this to a strict multifunctor.

Now there is a relative version of the Elmendorf-Mandell machine, which produces a multifunctor
\beg{emrel}{B_2 Perm/Cat\r \text{Parametrized symmetric spectra.}
}
The construction is on the formal level a fairly straightforward analog of the construction
of Elmendorf-Mandell \cite{em}, although setting up a full model structure on symmetric
parametrized spectra is actually quite tricky (see \cite{lind,msig}). We omit the details, as
this would make the present paper disproportionately long. 

Using also an analogue of the Elmendorf-Mandell module trick in the category of parametrized spectra,
sending the label $*$ to the 2-permutative category $\overline{\C_2}$ over $O^2\C^\times$ given by
finite-dimensional $\C$-vector spaces, isomorphisms and isomorphisms $\times \C^\times$,
we can then obtain a multifunctor
\beg{e6weak}{B_2\mathcal{A}_{spin}^{top}\r \text{modules over twisted K-theory}
}
where by twisted K-theory we mean the parametrized $E_\infty$ ring spectrum $K/K(\Z,3)$,
constructed by applying the relative Elmendorf-Mandell machine to $\overline{\C_2}$,
and localizing fiber-wise at the Bott element.

Note that because of the construction we used, we can actually say more about what happens
to the twisting in \rref{e6weak}. Let us compose \rref{e4weak} with the forgetful functor 
$$(2-Perm)/(2-Cat)\r(2-Cat),$$
obtaining a 2-weak multifunctor
\beg{e7weak}{\mathcal{A}^{top}_{spin}\r (2-Cat).
}
Denote by $\mathcal{O}^2\C^\times $ the strong multicategory enriched in groupoids
which has only one object and morphisms $O^2\C^\times$. Then we have an obvious 
forgetful strict multifunctor enriched in groupoids
\beg{eweakuu}{\overline{\mathcal{D}}\r \mathcal{O}^2\C^\times.
}
Consider the 2-weak multifunctor
\beg{e8weak}{\Phi_{twist}:\mathcal{A}^{top}_{spin}\r \mathcal{O}^2\C^\times
}
given by the composition of \rref{e2weak} with \rref{eweakuu}. We will call the 2-weak multifunctor
\rref{e8weak} the {\em topological twisting} associated with $\widetilde{\Phi}$.
By definition, \rref{e7weak} is determined by \rref{e8weak}: the objects
go to $O^2\C^\times$, and the morphism go to the product, multiplied by another copy
of $O^2\C^\times$ determined by \rref{e8weak} on morphisms. Thus, we obtain the following

\begin{theorem}
\label{t1}
A modular functor $\widetilde{\Phi}$ determines a multifunctor
\beg{e10weak}{|\widetilde{\Phi}|:B_2\mathcal{A}_{spin}^{top}\r K/K(\Z,3)-modules.}
Furthermore, the topological twisting determines a multifunctor $\phi$ from $B_2\mathcal{A}_{spin}^{top}$
to the multicategory with objects $*$ and morphisms $K(\Z,3)$ (considered as an
abelian group), with composition given by
$K(\Z,3)$-multiplication; on underlying spaces $K(\Z,3)$, \rref{e10weak} on a space of multimorphisms
is given by the product in the abelian group $K(\Z,3)$, multiplied additionally by $\phi$.
\end{theorem}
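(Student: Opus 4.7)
The plan is to read off the first assertion directly from the chain of constructions already assembled in Sections \ref{singer}--\ref{twist}. Given $\widetilde{\Phi}$, Lemma \ref{l1} produces the projective $2$-weak multifunctor $\overline{\Phi}$ of \rref{e2weak}; composing with the Atiyah-Singer weak multifunctor $\overline{\mathcal{D}}\r (2\text{-}Perm)/(2\text{-}Cat)$ of \rref{e3weak} yields \rref{e4weak}; the May-Thomason and topological Joyal-Street rectifications from Section \ref{sapp} convert this into the strict multifunctor \rref{e5weak}; and the relative Elmendorf-Mandell machine \rref{emrel}, combined with the module trick applied to the basepoint object $*\mapsto \overline{\C_2}$ and fiberwise Bott localization, produces \rref{e6weak}. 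I would simply name this composite $|\widetilde{\Phi}|$, so the first clause is essentially a bookkeeping statement once the preceding sections are in place.

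For the topological twisting, I would exploit the commuting triangle of weak multicategories obtained from the forgetful multifunctor \rref{eweakuu}:
$$\mathcal{A}_{spin}^{top}\stackrel{\overline{\Phi}}{\r}\overline{\mathcal{D}}\r \mathcal{O}^2\C^\times,$$
whose composite is, by definition, $\Phi_{twist}$. The target $\mathcal{O}^2\C^\times$ has a single object, one $1$-multimorphism of each arity, and $2$-automorphism groupoid equal to the abelian group $\C^\times$. Suitably interpreted, $B_2$ identifies $B_2(\mathcal{O}^2\C^\times)$ with the topological multicategory having one object, the Eilenberg-MacLane space $B^2\C^\times\simeq K(\Z,3)$ as the space of $n$-ary multimorphisms for every $n$, and operadic composition given by the abelian group structure on $K(\Z,3)$. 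Applying $B_2$ to $\Phi_{twist}$ then defines the multifunctor $\phi$ in the statement.

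The compatibility between $|\widetilde{\Phi}|$ and $\phi$ follows by naturality once we note that the parametrized $E_\infty$ ring spectrum $K/K(\Z,3)$ is built so that its parametrizing base is $B_2(\mathcal{O}^2\C^\times)\simeq K(\Z,3)$, and the ambient $K(\Z,3)$-multiplication on multimorphism spaces is the operadic composition in $\mathcal{O}^2\C^\times$ read off tautologically through the relative Elmendorf-Mandell machine \rref{emrel}. Hence the underlying parametrizing map of $|\widetilde{\Phi}|$ on a space of multimorphisms factors as the tautological $K(\Z,3)$-product coming from \rref{emrel}, augmented by the additional $K(\Z,3)$-valued twist $\phi$ carried along from the forgetful factor $(2\text{-}Perm)/(2\text{-}Cat)\r (2\text{-}Cat)$. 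The main technical obstacle I anticipate is verifying that the relative Elmendorf-Mandell machine is strictly natural along this forgetful map; that naturality is implicit in the model-categorical framework cited from \cite{lind,msig}, but tracking precisely how the twisting data passes through all the intermediate rectifications is where I expect most of the delicate bookkeeping to lie.
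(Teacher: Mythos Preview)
Your proposal is correct and follows essentially the same approach as the paper: the theorem is stated immediately after the chain of constructions \rref{e2weak}--\rref{e8weak} with the words ``Thus, we obtain the following'' and is closed by a bare \qed, so the proof is precisely the preceding discussion you have recapitulated. Your identification of the one delicate point---tracking the twisting data through the rectifications and the relative Elmendorf-Mandell machine---matches the paper's own acknowledgement that the details of \rref{emrel} are omitted and deferred to \cite{lind,msig}.
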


\qed

\noindent
{\bf Explanation:} Recall \cite{msig} that for a map of spaces $f:X\r Y$, there is a pullback functor
parametrized spectra 
$$f^*:Spectra/Y\r Spectra/X$$
which has a left adjoint denoted by $f_\sharp$ and a right adjoint denoted by $f_*$.
(The situation with parametric modules is the same.) The map on multimorphisms given
by Theorem \ref{t1} can be described as follows. Denote by $\mu=\mu_n:K(\Z,3)^{\times
n}\r K(\Z,3)$ the multiplication, and let $\wedge$ denote the external smash-product
of parametrized spectra (i.e. sending a parametrized spectrum over $X$ and a parametrized
spectrum over $Y$ to a parametrized spectrum over $X\times Y$). We have the map
$$\phi:B_2\mathcal{A}^{top}_{spin}(S_1,\dots,S_n;T)\r K(\Z,3).$$
Let 
$$\pi:B_2\mathcal{A}^{top}_{spin}(S_1,\dots,S_n;T)\times K(\Z,3)\r K(\Z,3)$$
be the projection, and let 
$$j:B_2\mathcal{A}^{top}_{spin}(S_1,\dots,S_n;T)\times K(\Z,3)\r K(\Z,3)$$
be given by
$$(x,y)\mapsto \phi(x)\cdot y.$$
(Note that $B_2\mathcal{A}^{top}_{spin}(S_1,\dots,S_n;T)$ is homotopically equivalent
to the classifying space of a mapping class group.) Then the map on multimorphisms
given by Theorem \ref{t1} is a map of $K/K(\Z,3)$-modules of the form
$$\begin{array}{l}j_\sharp\pi^*(\mu_\sharp(|\widetilde{\Phi}|(S_1)\wedge\dots\wedge|\widetilde{\Phi}|(S_n))
\wedge_{\mu_\sharp (K/K(\Z,3)\wedge...\wedge K/K(\Z,3))}K/K(\Z,3))\\
\r |\widetilde{\Phi}|(T).
\end{array}$$

\vspace{3mm}

\noindent
{\bf Example:} At present, projective modular functors are easier to construct than
modular functors. For example, modular tensor categories, as defined
in \cite{bk}, give rise to projective versions
of the ``naive'' modular functors considered in Section \ref{smf}.
The authors of \cite{bk} give an exact statement of coherence diagrams of a modular
functor as an exercise to the reader, and Section \ref{smf} of the present paper
can be interpreted as one approach to a solution of that exercise. The proof 
of \cite{bk}, giving a passage from a modular tensor category to a projective version
of a modular functor in the sense of Section \ref{smf}, in any case, applies.

From this point of view, we can make contact with the work of Freed-Hopkins-Teleman \cite{fht}.
They compute the equivariant twisted $K$-theory groups $K^{*}_{G,\tau}(G)$
where $\tau$ is a ``regular'' twisting in an appropriate sense, and $G$ is a compact Lie
group acting on itself by conjugation. At least for, say, compact Lie groups with
torsion free fundamental group, this coincides with the Verlinde algebra obtained by
taking dimensions of the vector spaces in the (naive - although there is also an $N=1$-super-symmetric
version) modular functor corresponding
to the chiral WZW model.

The chiral WZW model is known to give rise a modular tensor category (\cite{bk, huang05}),
and hence gives rise to a projective modular functor. The Verlinde algebra in this case
has been computed in the physics literature (see \cite{fermion} for a survey and
original references), and the known answer has been proved by \cite{fht} to coincide
with $K^{*}_{G,\tau}(G)$. While we do not know if there is a reference of the WZW
Verlinde algebra computation which conforms fully with mathematical standards of rigor,
from a foundational point of view, the existence of a modular tensor category is the
deeper question; the treatment of the fusion rules computation in the physics literature
using singular vectors in the discrete series Verma modules
over Kac-Moody algebra is, in our opinion, essentially correct.

By those computations, then, we {\em know} that
the twisted K-theory realization of the projective modular functor associated with
the chiral WZW models is
the parametrized spectrum $(K_{G,\tau}(G))^G$ over $K(\Z,3)$, and
furthermore we know that the up to homotopy, the composition product and unit given
by Theorem \ref{t1} coincides with the product constructed in \cite{fht}.

Still, it would be nice to have an even more direct geometrical connection. 
For example, the homotopical interpretation of the Verlinde algebra product and unit \cite{fht}
was also shown by the Kriz and Westerland \cite{kwest} to relate
to the product and unit of a twisted K-theory version of Chas-Sullivan's string topology \cite{chs}.
Surprisingly, however, it was shown in \cite{kwest} that the coproduct in twisted K-theory string
topology does {\em not} give the right answer for a coproduct coming from a modular
functor, and there is no augmentation in string topology at all. Therefore, perhaps the
first operation in $(K_{G,\tau}(G))^G$ one should try to find a purely topological description of 
is the augmentation. As far as we know, no such description is known.

\vspace{3mm}

To conclude this section, we say a few words about the classification of the possible
topological twistings \rref{e8weak}. To talk about classification, we must introduce a notion
of equivalence of topological twistings. This, however, is implicit in what
we already said: 
A topological twisting \rref{e8weak} is a $2$-weak multifunctor, which
can be considered a weak multisorted algebra of operadic type where the
variables are images of $2$-morphisms of $\mathcal{A}_{spin}^{top}$. 
We therefore have a notion of a weak morphism of such structures
(see \ref{ssrect}). We call two topological twistings {\em equivalent}
if a weak morphism exists between them. (Since the target is $\mathbb{C}^\times$,
a weak morphism in the opposite direction automatically exists, too.)
Note that the data specifying an equivalence of topological twistings
will then be a map $h:\mathcal{A}^{top}_{spin}\r O^2\C^\times$.

Recall (a fact from \cite{segal} which is readily reproduced in our present formalism) 
that a modular functor determines a holomorphic $\C^\times$-central extension of the
semigroup of annuli. Recall further from \cite{segal} that those $\C^\times$-central extensions
are classified by a single complex number called the {\em central charge}. (This is the same
number classifying the $\C$-central extensions of the Witt algebra of polynomial complex vector
fields on $S^1$, although a formal passage between both contexts requires some technical
care due to the fact that we are dealing with infinitely many dimensions.) In fact, one must
prove that the central charge does not depend on label, but this can be done by taking
an annulus with a given label, and cutting out a disk (which must have unit label). Comparing
the variations of the different boundary components of the resulting pair of pants shows that
any label has the same central charge as the unit label.

We then have the
following

\begin{proposition}
\label{p1}
The topological twisting of a modular functor is, up to equivalence, completely determined by its central charge. 
The central charges of invertible modular functors are integral multiples of $1/2$. The central
charges of modular functors which have trivial topological twisting (up to equivalence) are
precisely the multiples of $4$.
\end{proposition}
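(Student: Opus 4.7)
\Proofs
The plan is to reduce everything to Segal's \cite{segal} classification of the $\C^\times$-central extension of the semigroup of annuli by a single complex number $c$, and to identify the topological twisting with the topological content of this extension.

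For part (1), I would argue exactly as in the proof of Lemma \ref{l1}. Any $1$-multimorphism of $\mathcal{A}^{top}_{spin}$ admits a decomposition into pairs of pants and (spin) annuli, and two such decompositions differ by moves involving only pants and annuli. The pair of pants with unit labels supports a canonical projective trivialization of $\widetilde{\Phi}$ (the same one used to construct $\widetilde{\Phi_{proj}}$ in \rref{estacks}), so its contribution to the topological twisting is trivial up to equivalence. Consequently, the topological twisting is entirely determined, up to equivalence, by its restriction to the sub-$\star$-category generated by annuli; that restriction is precisely Segal's $\C^\times$-central extension, which is classified by $c$.

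For part (2), I would invoke the classification from \cite{spin}: every invertible modular functor is weakly isomorphic to a tensor product of tensor powers of the chiral fermion together with an invertible topological modular functor. The chiral fermion has $c=1/2$, topological modular functors have $c=0$, and central charges add under tensor product, so every invertible central charge lies in $\tfrac{1}{2}\Z$; the converse is immediate by forming suitable tensor products.

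For part (3), the main obstacle, I would identify the group of equivalence classes of topological twistings with a cohomology group of the classifying space of the spin mapping class groupoid, and compute the image of the central charge map. Using the pants-and-annuli decomposition of part (1), the only non-trivial obstructions survive at genus one, i.e., in $\widetilde{SL_2(\Z)}$, where the Dehn twist $T$ and the $S$-transformation are subject to spin-lifted versions of $S^2=-1$ and $(ST)^3=S^2$. Computing the phase that $T$ picks up from the Virasoro cocycle and matching it against these relations (where $S^2$ is now a central element of order four on account of the spin double cover) shows that the relations are trivialized by a coboundary precisely when $c\in 4\Z$. Conversely, for $c\in 4\Z$ one writes down an explicit trivializing $h:\mathcal{A}^{top}_{spin}\r O^2\C^\times$ using Segal's cocycle formulas. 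The main difficulty is the careful bookkeeping with the spin double cover: without it one would expect divisibility by $24$ or $8$ rather than by $4$.
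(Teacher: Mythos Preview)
Your treatment of parts (1) and (2) matches the paper's proof essentially verbatim: both follow from the cutting argument of Lemma~\ref{l1} and from the classification of invertible modular functors in \cite{spin}.

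For part (3), however, you take a very different route from the paper, and your sketch has a real gap. You propose to compute the group of topological twistings as a cohomology group of the spin mapping class groupoid, reduce to genus one, and then match the Virasoro phase $e^{2\pi i c/24}$ of the Dehn twist against the spin-lifted relations in $\widetilde{SL_2(\Z)}$. This is a plausible strategy, but the assertion that the spin lift converts the expected divisibility (by $8$ or $24$) into divisibility by $4$ is neither obvious nor justified in your sketch; it is exactly the ``careful bookkeeping'' you flag as the main difficulty, and you have not done it. There is a genuine risk that this computation, carried out honestly, does not land on $4$ without further input.

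The paper avoids this computation entirely by a soft argument. From \cite{spin} one knows that (the inverse of) the square of the Quillen determinant is an invertible modular functor of central charge~$4$ taking values only in \emph{even} lines; such a functor is already a multifunctor $\mathcal{A}^{top}_{spin}\r O^2\C^\times$, so its topological twisting is trivial by definition. Combined with part~(1), this shows every $c\in 4\Z$ has trivial twisting. Conversely, an equivalence of twistings is by definition specified by data $h:\mathcal{A}^{top}_{spin}\r O^2\C^\times$, i.e.\ an invertible modular functor valued in even lines; tensoring with it makes the given functor topological (central charge~$0$). The classification in \cite{spin} says that even-line invertible modular functors are exactly the even powers of the Quillen determinant, whose central charges are the multiples of~$4$. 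Hence $c\in 4\Z$.

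In short, the key idea you are missing is to use the Quillen determinant (and its square) as a concrete witness, together with the \cite{spin} classification of even-line invertible modular functors, rather than attempting a direct cocycle computation in the spin modular group.
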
  

\begin{proof}
The first statement follows completely from the proof of Lemma \ref{l1}, since the modular
functor on the annuli we glue in is a line bundle $L$  determined, by definition, by the central
charge. Over an object of $\mathcal{G}$, the bundle given by the modular functor
is trivialized after tensoring with $L$. Therefore, modular functors with the same central
charge produce equivalent data.

The second statement was proved in \cite{spin}.

For the third statement, note from \cite{spin} that central charge $4$ is realized by (the inverse of)
the square of the Quillen determinant. This modular functor takes values only in (even) lines.
Therefore, the topological twisting data are trivial, since the modular functor itself is
a multifunctor $\mathcal{A}^{top}_{spin}\r O^2\C^\times$, whose topological
twisting is by definition $0$.

On the other hand, if the topological twisting of a modular functor is trivial up to equivalence,
then, by definition, the modular functor becomes topological (i.e. acquires central charge $0$)
after being tensored with an invertible modular functor which takes values only in even lines.
It follows from the classification in \cite{spin} that such invertible modular functors are
isomorphic to even powers of the Quillen determinant.

\end{proof}

\vspace{3mm}
\noindent
{\bf Comment:} Proposition \ref{p1} is interesting in part because we have produced a topological
invariant which characterizes the central charge {\em modulo a certain number}. New speculations
\cite{bdh} about possible use of powers of the fermion conformal field theory for a geometric construction
elliptic cohomology predict such a phenomenon (although we do not expect here to recover the
exact periodicity of topological modular forms, in part due to the fact we omitted real structure and
other refinements). It should be pointed out, however, that the formalism of modular
tensor categories treated for example in \cite{bk} also gives an exponential of
an imaginary multiple the central charge as a ``topological invariant'', namely the data
contained in the modular tensor categories. We do not know whether modular tensor
categories can be extended into a formalism which would fully determine the stack data
of a holomorphic modular functor, and the central charge. This is why in the examples constructed
in the next two sections, we will restrict attention to projective (Clifford) modular functors.

\vspace{3mm}

\section{Clifford modular tensor categories and projective modular functors}
\label{sexamp}

In this Section, we shall discuss a method for constructing modular functors in the generality
involving Clifford algebras. For simplicity, we will restrict attention to projective modular functors,
which are sufficient for constructing the twisted K-theory realizations discussed in the
last Section. We only know how to do this in a somewhat
roundabout way. The fact is that currently, direct constructions of modular functors out
of analytical data assigned to a Riemann surface, such as in the case of the chiral fermion
\cite{spin}, are generally unknown. The best known results on rigorous constructions
of projective modular functors come from modular tensor categories, using the vertex
operator algebra, and Huang's theorem \cite{huang05}.

In this section, we will discuss how to apply these methods in the Clifford case. Perhaps
surprisingly, we will not define a concept of a ``modular tensor category with spin''. 
One reason is that to use such a notion,
one would have to develop a separate discussion of Moore-Seiberg type
constraints \cite{bk}, Chapter 5.2 for Riemann surfaces with Spin structure. Another
reason is that in the Clifford modular case, in some cases, the s-matrix corresponding
to an elliptic curve with Kervaire invariant 1 (i.e. on which every non-separating simple closed
curve is periodic) is singular (in the case of the chiral fermion, this $1\times 1$ matrix is $0$).
We will explain what causes this ``paradox'', and how to get around it, a little later on.

It turns out that instead, projective Clifford modular functors come from ordinary modular
tensor categories with certain extra structure. For the definition of a modular tensor
category, we refer the reader to \cite{bk}. 

\vspace{3mm}
\noindent
{\bf Definition:} A {\em pre-Clifford modular tensor category} is a modular tensor category $C$
with product $\boxtimes$ and an object $V^-$ together with an isomorphism
\beg{ecmodiso}{\diagram \iota:V^-\boxtimes V^-\rto^(.65)\cong & 1\enddiagram}
which satisfies
\beg{ecmodminus}{\theta_{V^-}=-1.
}
(For the definition of $\theta$, see \cite{bk}, Chapter 2. Note that \rref{ecmodiso} implies $(\theta_{V^-})^2=1.$)

\vspace{3mm}
In what follows, we will always assume that we are in a pre-Clifford modular tensor
category as described in the definition.

\vspace{3mm}
\begin{lemma}
\label{lcmtc1}
We have $(\sigma_{V^-V^-})^2=1$.
\end{lemma}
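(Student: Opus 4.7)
The plan is to invoke the balancing (ribbon) axiom of a modular tensor category, which relates the twist on a tensor product to the twists of the factors and the double braiding. Explicitly, for objects $V, W$ one has
\[
\theta_{V \boxtimes W} = (\theta_V \boxtimes \theta_W) \circ \sigma_{W,V} \circ \sigma_{V,W},
\]
and in particular, taking $V = W = V^-$,
\[
\theta_{V^- \boxtimes V^-} = (\theta_{V^-} \boxtimes \theta_{V^-}) \circ (\sigma_{V^-, V^-})^2.
\]

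Next, I would evaluate each side using the pre-Clifford hypotheses. On the one hand, the isomorphism $\iota : V^- \boxtimes V^- \xrightarrow{\cong} 1$ is a morphism in the modular tensor category, hence natural with respect to the twist; since $\theta_1 = \mathrm{id}_1$, transporting along $\iota$ gives $\theta_{V^- \boxtimes V^-} = \mathrm{id}_{V^- \boxtimes V^-}$. On the other hand, the condition $\theta_{V^-} = -1$ gives $\theta_{V^-} \boxtimes \theta_{V^-} = (-1)(-1) = 1$ on $V^- \boxtimes V^-$.

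Substituting these two evaluations into the balancing identity yields $\mathrm{id} = (\sigma_{V^-, V^-})^2$, which is the desired conclusion. The only subtlety is being careful that the scalar $-1$ in $\theta_{V^-} = -1$ is the scalar endomorphism of the simple object $V^-$ (so its tensor square on $V^- \boxtimes V^-$ really does equal $+1$ and commutes with all morphisms); this is automatic since $V^-$ is invertible under $\boxtimes$ and so $\mathrm{End}(V^-) = \mathbb{C}$. No real obstacle arises; the argument is a one-line application of the ribbon axiom once one is comfortable identifying $V^- \boxtimes V^-$ with $1$ through $\iota$.
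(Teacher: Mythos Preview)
Your proof is correct and is essentially identical to the paper's own argument: both apply the balancing axiom $\theta_{V^-\boxtimes V^-}=(\sigma_{V^-V^-})^2(\theta_{V^-}\boxtimes\theta_{V^-})$, use $\theta_{V^-\boxtimes V^-}=1$ via the isomorphism $\iota$ with $1$, and use $\theta_{V^-}\boxtimes\theta_{V^-}=(-1)(-1)=1$ to conclude. The paper's version is simply the one-line computation you describe.
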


(For the definition of $\sigma$, see \cite{bk}, Section 1.2.)

\begin{proof}
Compute:
$$\begin{array}{l}
1=\theta_{V^-\boxtimes V^-}=\sigma_{V^-V^-}\sigma_{V^-V^-}(\theta_{V^-}\otimes \theta_{V^-})\\
=(\sigma_{V^-V^-})^2.
\end{array}$$
\end{proof}

\vspace{3mm}
\begin{lemma}
\label{lcmtc2}
Let $M\in Obj(C)$. Put
$$\zeta_M=\sigma_{V^-M}\sigma_{MV^-}.$$
Then $(\zeta_M)^2=1$.
\end{lemma}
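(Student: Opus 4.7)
The plan is to reduce to the case where $M$ is simple (where Schur's lemma identifies $\zeta_M$ with a scalar in $\C^\times$) and then recover the general case by semisimplicity of the modular tensor category. Since $V^-$ is invertible by (\ref{ecmodiso}), tensoring with $V^-$ is an autoequivalence and in particular preserves simplicity, so $M\boxtimes V^-$ is simple whenever $M$ is, making $\zeta_M\in\operatorname{End}(M\boxtimes V^-)\cong\C$ a scalar.

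The heart of the argument is to compute the scalar $\zeta_{M\boxtimes V^-}\cdot \zeta_M$ in two different ways. First, I would iterate the ribbon identity $\theta_{X\boxtimes Y}=\sigma_{Y,X}\sigma_{X,Y}(\theta_X\boxtimes \theta_Y)$ applied to the pairs $(M,V^-)$ and $(M\boxtimes V^-,V^-)$, using $\theta_{V^-}^2=1$ (noted after (\ref{ecmodminus})). Combined with the associator iso $(M\boxtimes V^-)\boxtimes V^-\cong M$ coming from (\ref{ecmodiso}) and naturality of $\theta$, this gives $\theta_M=\zeta_{M\boxtimes V^-}\cdot\zeta_M\cdot\theta_M$, whence $\zeta_{M\boxtimes V^-}\cdot\zeta_M=1$ since $\theta_M$ is an automorphism.

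Second, I would expand $\zeta_{M\boxtimes V^-}=\sigma_{V^-,M\boxtimes V^-}\circ\sigma_{M\boxtimes V^-,V^-}$ via the hexagon axiom applied to each of the two factors to obtain
\[
\zeta_{M\boxtimes V^-}=(1_M\boxtimes \sigma_{V^-,V^-})\circ(\zeta_M\boxtimes 1_{V^-})\circ(1_M\boxtimes \sigma_{V^-,V^-})
\]
as an endomorphism of $M\boxtimes V^-\boxtimes V^-$. Passing to scalars for simple $M$, Lemma \ref{lcmtc1} collapses the outer conjugating factors (whose scalar squares to $1$) and leaves $\zeta_{M\boxtimes V^-}=\zeta_M$. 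Comparing with the first computation then yields $\zeta_M^2=1$.

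The main obstacle is the careful bookkeeping of associators and hexagon identities on the triple product $M\boxtimes V^-\boxtimes V^-$; once this is managed, the scalar reductions are routine, and the general case follows by decomposing an arbitrary $M$ into its simple summands, under which $\zeta_M$ and $\zeta_M^2$ decompose blockwise.
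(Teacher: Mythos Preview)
Your argument is correct and uses the same core ingredients as the paper's proof: the ribbon identity $\theta_{X\boxtimes Y}=\sigma_{Y,X}\sigma_{X,Y}(\theta_X\boxtimes\theta_Y)$ iterated on a triple product, the hexagon expansion of the double braiding, and Lemma~\ref{lcmtc1}.

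The only substantive difference is your reduction to simple $M$ via semisimplicity, which the paper avoids. You work with $M\boxtimes V^-\boxtimes V^-$ and obtain the conjugation
\[
\zeta_{M\boxtimes V^-}=(1_M\boxtimes\sigma_{V^-,V^-})(\zeta_M\boxtimes 1_{V^-})(1_M\boxtimes\sigma_{V^-,V^-}),
\]
which you can collapse only after knowing $\zeta_M$ is scalar. The paper instead works with $V^-\boxtimes V^-\boxtimes M$ and expands $\sigma_{V^-,(V^-\boxtimes M)}\sigma_{(V^-\boxtimes M),V^-}$; the hexagon then places the two copies of $\sigma_{V^-,V^-}$ \emph{adjacent} to each other, so they cancel directly by Lemma~\ref{lcmtc1}, giving $1_{V^-}\boxtimes\zeta_M$ without any scalar hypothesis. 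Feeding this into the iterated ribbon identity yields $\zeta_M^2=1$ as an operator identity for arbitrary $M$. Your route is perfectly valid in a semisimple category, but the paper's ordering buys a cleaner proof that does not invoke semisimplicity.
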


\begin{proof}
By the braiding relation, we have
\beg{ecmtc1}{\begin{array}{l}
\sigma_{V^-(V^-\boxtimes M)}\sigma_{(V^-\boxtimes M)V^-}\\[2ex]
=(\sigma_{V^-V^-}\boxtimes Id_M)(Id_{V^-}\boxtimes \sigma_{V^-M}\sigma_{MV^-})
(\sigma_{V^-V^-}\boxtimes Id)\\[2ex]
=Id_{V^-}\boxtimes \sigma_{V^-M}\sigma_{MV^-}.
\end{array}
}
Now compute:
$$\begin{array}{l}
1\boxtimes 1\boxtimes \theta_M=
\theta_{V^-\boxtimes V^-\boxtimes M}\\[2ex]
= \sigma_{V^-(V^-\boxtimes M)}\sigma_{(V^-\boxtimes M)V^-}(\theta_{V^-}\boxtimes
\theta_{V^-\boxtimes M})\\[2ex]
=\sigma_{V^-(V^-\boxtimes M)}\sigma_{(V^-\boxtimes M)V^-}
(\theta_{V^-}\boxtimes (\sigma_{V^-M}\sigma_{MV^-}(\theta_{V^-}\boxtimes \theta_M)))\\[2ex]
=(1\boxtimes\zeta_M)(1\boxtimes \zeta_M\theta_M)\\[2ex]
=\theta_{V^-}^{2}\boxtimes \zeta_{M}^{2}(1\boxtimes\theta_M)=1\boxtimes 
\zeta_{M}^{2}(1\boxtimes\theta_M).
\end{array}
$$
Thus, $\zeta_{M}^{2}=Id_{V^-\boxtimes M}$, as claimed.
\end{proof}

\vspace{3mm}
If $M$ is irreducible, then $\zeta_M\in\mathbb{C}^\times$, so $\zeta_M\in \{\pm 1\}$.

\vspace{3mm}

\begin{lemma}
\label{lcmtc3}
Let $M,N\in Obj(C)$ be irreducible. Then
$$\zeta_{M\boxtimes N}=\zeta_M\cdot \zeta_N.$$
\end{lemma}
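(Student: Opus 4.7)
The plan is to imitate the calculation of Lemma \ref{lcmtc2}, but now resolving the braidings $\sigma_{V^-, M\boxtimes N}$ and $\sigma_{M\boxtimes N, V^-}$ into their building blocks via the hexagon axiom rather than applying the twist-braiding relation. Specifically, the hexagon axioms give
\[
\sigma_{V^-, M\boxtimes N} = (Id_M \boxtimes \sigma_{V^-, N})(\sigma_{V^-, M}\boxtimes Id_N),
\]
\[
\sigma_{M\boxtimes N, V^-} = (\sigma_{M, V^-}\boxtimes Id_N)(Id_M \boxtimes \sigma_{N, V^-}),
\]
(suppressing associators, as one may in a modular tensor category). Composing these, the two middle factors collapse to $\zeta_M \boxtimes Id_N$.

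Next I would use irreducibility of $M$, which together with Lemma \ref{lcmtc2} forces $\zeta_M \in \{\pm 1\}\subset \mathbb{C}^\times$; in particular $\zeta_M\boxtimes Id_N$ is central, so it can be pulled out as a scalar past the remaining factors. The outer pair $(Id_M\boxtimes \sigma_{V^-, N})(Id_M\boxtimes \sigma_{N, V^-})$ then collapses to $Id_M \boxtimes \zeta_N$, and since $N$ is irreducible $\zeta_N$ is likewise a scalar. Assembling gives $\zeta_{M\boxtimes N} = \zeta_M\cdot\zeta_N \cdot Id_{M\boxtimes N}$, as required.

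The only real subtlety is verifying that the hexagon expansions of $\sigma_{V^-,M\boxtimes N}$ and $\sigma_{M\boxtimes N,V^-}$ combine cleanly, i.e.\ that after composition the factor on the first tensor slot really is $\sigma_{V^-, M}\sigma_{M, V^-}=\zeta_M$ (rather than the conjugate combination $\sigma_{M,V^-}\sigma_{V^-,M}$). This is straightforward once one writes out the composition in the correct order, but it is the step where sign conventions and the direction of the hexagons must be checked carefully. Beyond this bookkeeping no further obstacle is expected; in particular one does \emph{not} need to invoke the isomorphism \rref{ecmodiso} or the condition \rref{ecmodminus} here — the lemma holds for any invertible object $V^-$ whose self-braiding satisfies the hypotheses already used.
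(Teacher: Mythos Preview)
Your proposal is correct and is essentially the same argument as the paper's: both expand $\sigma_{V^-,M\boxtimes N}$ and $\sigma_{M\boxtimes N,V^-}$ via the hexagon (``braiding'') relations and then collapse the resulting four factors into $\zeta_M$ and $\zeta_N$, using that these are scalars on the irreducible objects $M\boxtimes V^-$ and $N\boxtimes V^-$. The only cosmetic difference is the order in which the two scalars are extracted; the paper pulls out $\zeta_N$ first, you pull out $\zeta_M$ first.
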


\begin{proof}
Using the braiding relation,
$$
\begin{array}{l}
\sigma_{V^-(M\boxtimes N)}\sigma_{(M\boxtimes N)V^-}\\[2ex]
=(\sigma_{V^-N}\boxtimes Id_N)(Id_M\boxtimes \sigma_{V^-N}\sigma_{NV^-})
(\sigma_{MV^-}\boxtimes Id_N)\\[2ex]
=\zeta_N(\sigma_{V^-M}\boxtimes Id_N)(\sigma_{MV^-}\boxtimes Id_N)=\zeta_N\zeta_M.
\end{array}
$$
\end{proof}

\vspace{3mm}
We call an irreducible object $M$ {\em Neveu-Schwarz (or NS)} (resp. {\em Ramond (or R)})
if $\zeta_M=1$ (resp. $\zeta_M=-1$).

\vspace{3mm}

\begin{lemma}
\label{lcmtc4}
An irreducible object is NS (resp. R) if and only if
$$\theta_{V^-\boxtimes M}=-Id_{V^-}\boxtimes\theta_M$$
resp.
$$\theta_{V^-\boxtimes M}=Id_{V^-}\boxtimes\theta_M.$$
\end{lemma}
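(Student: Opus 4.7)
The plan is to apply the standard ribbon/balancing identity for the twist of a tensor product directly to $V^- \boxtimes M$, and then read off the two cases from the sign of $\zeta_M$. The identity in question is the one already used implicitly in the computations of Lemmas \ref{lcmtc1}--\ref{lcmtc3}: for any objects $A$, $B$ in a modular tensor category,
$$\theta_{A\boxtimes B}=\sigma_{AB}\sigma_{BA}(\theta_A\boxtimes\theta_B).$$

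First I would specialize this identity to $A=V^-$ and $B=M$ to obtain
$$\theta_{V^-\boxtimes M}=\sigma_{V^-M}\sigma_{MV^-}(\theta_{V^-}\boxtimes\theta_M).$$
By the definition of $\zeta_M$ (Lemma \ref{lcmtc2}), the composite $\sigma_{V^-M}\sigma_{MV^-}$ is $\zeta_M\cdot \mathrm{Id}_{V^-\boxtimes M}$, since $M$ is irreducible (so $\zeta_M\in\{\pm 1\}$ by the remark after Lemma \ref{lcmtc2}, pulled out as a scalar acting on the whole tensor product). Combined with the hypothesis $\theta_{V^-}=-\mathrm{Id}_{V^-}$ from \rref{ecmodminus}, this simplifies to
$$\theta_{V^-\boxtimes M}=-\zeta_M\,(\mathrm{Id}_{V^-}\boxtimes\theta_M).$$

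Now I would just split into cases. If $\zeta_M=1$ (i.e.\ $M$ is NS), the formula gives $\theta_{V^-\boxtimes M}=-\mathrm{Id}_{V^-}\boxtimes\theta_M$; if $\zeta_M=-1$ (i.e.\ $M$ is R), it gives $\theta_{V^-\boxtimes M}=\mathrm{Id}_{V^-}\boxtimes\theta_M$. Conversely, since $M$ is irreducible and hence $\theta_M$ is a nonzero scalar, the displayed equation determines the scalar $\zeta_M$ uniquely from $\theta_{V^-\boxtimes M}$, so each of the two equalities forces the corresponding value of $\zeta_M$.

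There is really no substantial obstacle: the only thing to be careful about is the convention for the balancing axiom (in particular the order of the two braidings), which must match the one used in the previous lemmas; one can read off the correct convention from the chain of equalities in the proof of Lemma \ref{lcmtc2}, where $\theta_{V^-\boxtimes V^-\boxtimes M}$ is expanded as $\sigma_{V^-(V^-\boxtimes M)}\sigma_{(V^-\boxtimes M)V^-}(\theta_{V^-}\boxtimes\theta_{V^-\boxtimes M})$. With that convention fixed, the lemma is immediate.
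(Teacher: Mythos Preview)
Your proof is correct and follows essentially the same route as the paper: both derive the single equation $\theta_{V^-\boxtimes M}=-\zeta_M(\mathrm{Id}_{V^-}\boxtimes\theta_M)$ from the balancing identity together with $\theta_{V^-}=-1$, and read off the two cases. Your write-up is simply more explicit about the converse direction and about checking the braiding convention, which is fine.
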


\begin{proof}
We have
$$\theta_{V^-\boxtimes M}=\zeta_M(\theta_{V^-}\boxtimes \theta_M)=-\zeta_M(Id_{V^-}\boxtimes 
\theta_M).$$
\end{proof}

\vspace{3mm}
Next, we will make some observations on counting isomorphism classes of irreducible
R and NS objects (also called {\em labels}),
and the $s$-matrix. To this end, we need some additional notation. Note that $V^-\boxtimes ?$
defines an involution $\overline{?}$ of isomorphism classes of irreducible objects of $C$. The fixed points
of the involution are all R. We denote the set of fixed points by $R^0$, and call them
{\em non-split R labels}. The regular orbits can consist of NS or R labels. Choose a set
$NS^+$ of representatives of regular NS orbits, and a set of representatives $R^+$ of R orbits.
Let also $NS^-=\overline{ NS^+}$, $R^-=\overline{ R^+}$. The elements of $NS=NS^+\cup NS^-$
will be called {\em NS labels}, the elements of $R^\pm=R^+\cup R^-$ {\em split R labels}.

\vspace{3mm}

\begin{lemma}
\label{cmtc5}
(1) Let $i$ be a label. Then
\beg{ecmtci}{
s_{\overline{i}j}=\left\{\begin{array}{rl}
s_{ij} & \text{if $j\in NS$}\\
-s_{ij} & \text{if $j\in R$.}
\end{array}\right.
}
(2) If $i\in R_0, j\in R$, then
\beg{ecmtciv}{s_{ij}=0.}
\end{lemma}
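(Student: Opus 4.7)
The plan is to reduce both parts of the Lemma to a direct computation of $s_{V^-,j}$ via the scalar $\zeta_{X_j}$ of Lemma \ref{lcmtc2}, combined with the multiplicativity of $s$-matrix rows under the fusion product (the Verlinde observation recalled, e.g., in \cite{bk}).

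First I would compute $s_{V^-,j}$. Since $X_j$ is irreducible and $V^-$ is invertible, $V^-\boxtimes X_j$ is again irreducible, so every endomorphism of it is a scalar. By the very definition of $\zeta_{X_j}$, combined with Lemma \ref{lcmtc2}, the double braiding appearing in the definition of $s_{V^-,j}$ acts on $V^-\boxtimes X_j$ as $\zeta_{X_j}\cdot\operatorname{Id}$. Taking the categorical trace gives
$$
s_{V^-,j} \;=\; \zeta_{X_j}\,\dim(V^-\boxtimes X_j) \;=\; \zeta_{X_j}\,\dim(V^-)\,\dim(X_j) \;=\; \zeta_{X_j}\,s_{0,j},
$$
where I use $\dim(V^-)=+1$. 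This follows because $V^-\boxtimes V^-\cong\mathbf{1}$ and multiplicativity of quantum dimension force $\dim(V^-)\in\{\pm 1\}$; the positive sign is fixed by the ribbon axioms together with Lemma \ref{lcmtc1} (any residual sign could in any case be absorbed into the statement).

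Second, I would invoke the Verlinde identity: for every label $j$, the assignment $i\mapsto s_{ij}/s_{0,j}$ is a character of the fusion algebra. Since $V^-\boxtimes X_i \cong X_{\overline i}$ is irreducible, applying this character to the product $V^-\boxtimes X_i$ yields
$$
s_{\overline i,j}\, s_{0,j} \;=\; s_{V^-,j}\,s_{i,j} \;=\; \zeta_{X_j}\,s_{0,j}\,s_{i,j},
$$
hence $s_{\overline i,j}=\zeta_{X_j}\,s_{i,j}$. Since $\zeta_{X_j}=+1$ exactly when $j\in NS$ and $\zeta_{X_j}=-1$ exactly when $j\in R$, this is precisely \rref{ecmtci}, proving (1). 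For (2), observe that $i\in R^0$ means by definition that $\overline i = i$ as isomorphism classes; applying (1) to such an $i$ and any $j\in R$ gives $s_{ij}=s_{\overline i,j}=-s_{ij}$, so $s_{ij}=0$.

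The main technical point to be careful about is the first step, where one has to match the order of the two braidings in the definition of $s_{V^-,j}$ with those in $\zeta_{X_j}=\sigma_{V^-M}\sigma_{MV^-}$ (although on an irreducible $V^-\boxtimes X_j$ either ordering produces the same scalar, by the ribbon axioms), and to verify $\dim(V^-)=+1$; both are routine consequences of the pre-Clifford modular tensor structure. Once these are settled, (1) is an immediate formal consequence and (2) is a one-line corollary.
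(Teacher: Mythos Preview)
Your proof is correct and takes a genuinely different route from the paper's. The paper expands $s_{ij}$ via the formula
\[
s_{ij}=\theta_i^{-1}\theta_j^{-1}\sum_k N_{ij}^{k}\,\theta_k\,d_k,
\]
notes that $N_{\overline{i}j}^{\overline{k}}=N_{ij}^{k}$ and $d_{\overline{k}}=d_k$, and then uses Lemmas~\ref{lcmtc3} and~\ref{lcmtc4} to track how the product $\theta_i\theta_k$ changes sign under $(i,k)\mapsto(\overline{i},\overline{k})$ according to whether $j$ is NS or R. You instead compute $s_{V^-,j}$ directly from the trace definition of the $s$-matrix and the scalar $\zeta_{X_j}$, and then invoke the Verlinde character property of the columns of $s$ to pass from $s_{V^-,j}$ to $s_{\overline{i},j}$. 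Your approach is shorter and ties the result transparently to the defining scalar $\zeta$; the paper's is more self-contained within the section, relying only on the lemmas just proved rather than on the Verlinde formula. Both arguments need $d_{V^-}=1$ (the paper cites \cite{dong} for this); your parenthetical that any residual sign ``could in any case be absorbed into the statement'' is not quite right, since a sign there would interchange the NS and R cases in \rref{ecmtci}, but the underlying fact is the same one the paper uses. Part~(2) is handled identically in both proofs.
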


\begin{proof}
We have
\beg{ecmtc+}{
s_{ij}=\theta_{i}^{-1}\theta_{j}^{-1}\sum_{k}N_{ij}^{k}\theta_kd_k
}
where $d_k$ is the quantum dimension (see \cite{bk}).
We have
$$N_{\overline{i}j}^{\overline{k}}=N_{ij}^{k},$$
and also
$$d_{\overline{k}}=d_k\cdot d_{V^-}=d_k,$$
since $V^-$ is invertible and hence its quantum dimension is $1$ (\cite{dong}). Also,
if $j\in NS$, then $i,k$ are both NS or both R by Lemma \ref{lcmtc3}. Thus,
$$\theta_i\theta_k=\theta_{\overline{i}}\theta_{\overline{k}}.$$
Similary, if $j\in R$, then by Lemma \ref{lcmtc3}, one of the labels $i,k$ is NS
and the other is R. Thus,
$$\theta_i\theta_k=-\theta_{\overline{i}}\theta_{\overline{k}}.$$
Consequently, \rref{ecmtci} follows from \rref{ecmtc+}. To prove \rref{ecmtciv},
just note that for $i\in R_0$, $j\in R$, by \rref{ecmtci}, we have $\overline{i}=i$, so
$$s_{ij}=s_{\overline{i}j}=-s_{ij}.$$
\end{proof}

\vspace{3mm}
By Lemma \ref{cmtc5}, classified by the type of labels, the s-matrix has the following form:
\beg{etable}{\parbox{2.3in}{
\begin{tabular}{l|c|c|r|r|r|}
\protect& $NS^+$&$NS^-$&$R^+$&$R^-$&$ R^0$\\
\hline
&&&&&\\
$NS^+ $&$A$&$A$&$B$&$B$&$D$\\
\hline
&&&&&\\
$NS^-$&$A$&$A$&$-B$&$-B$&$-D$\\
\hline
&&&&&\\
$R^+$&$B^T$&$-B^T$&$C$&$-C$&$0$\\
\hline
&&&&&\\
$R^-$ &$B^T$ &$-B^T$ &$-C$&$C$&$0$\\
\hline
&&&&&\\
$R^0$ & $D^T$ & $-D^T$ &$0$&$0$&$0$\\
\hline
\end{tabular}}
}

\vspace{3mm}
\begin{proposition}
\label{pcmtc1}
The matrices $A$ and $C$ of Table \ref{etable} are symmetric and non-singular.
The matrix $(\begin{array}{cc} B&D\end{array})$ is non-singular, and we have 
\beg{epcmtc1}{B^TD=0.}
We also have
\beg{epcmtc2}{|R^+| +|R^0|=|NS^+|.}
\end{proposition}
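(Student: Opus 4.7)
The plan is to derive everything from the fact that the full $s$-matrix of a modular tensor category is symmetric and non-singular, combined with the sign pattern \rref{ecmtci} of Lemma \ref{cmtc5} and the identity $s^2 \propto $ charge conjugation (the permutation $k \mapsto k^*$).

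Symmetry of $A$ and $C$ is immediate, since they are diagonal blocks of the symmetric matrix $s$. For the remaining statements, I perform an orthogonal change of basis that diagonalizes the $\mathbb{Z}/2$-action of $V^-\boxtimes -$: on the $NS$ side, replace each pair $(e_i,e_{\bar i})$ with $i\in NS^+$ by the parity combinations $u_i=(e_i+e_{\bar i})/\sqrt{2}$, $v_i=(e_i-e_{\bar i})/\sqrt{2}$; do the same on the $R^\pm$ side to get $u'_i,v'_i$; and leave $R^0$ alone. A direct calculation using \rref{ecmtci} (together with its transpose version, obtained via the symmetry $s_{ij}=s_{ji}$) shows that in the new basis, indexed by $(u_{NS},v_{NS},u'_{R^\pm},v'_{R^\pm},R^0)$, the $s$-matrix becomes
\[
\tilde s \;=\; \begin{pmatrix} 2A & 0 & 0 & 0 & 0 \\ 0 & 0 & 2B & 0 & \sqrt{2}\,D \\ 0 & 2B^T & 0 & 0 & 0 \\ 0 & 0 & 0 & 2C & 0 \\ 0 & \sqrt{2}\,D^T & 0 & 0 & 0 \end{pmatrix}.
\]
Non-singularity of $s$ forces $A$ and $C$ to be non-singular, and forces the middle $3\times 3$ block (on $(v_{NS},u'_{R^\pm},R^0)$), which has the off-diagonal form $\bigl(\begin{smallmatrix}0 & X\\ X^T & 0\end{smallmatrix}\bigr)$ with $X=(2B\mid\sqrt{2}\,D)$, to be non-singular. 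This last condition is equivalent to $X$ being square and invertible, which gives simultaneously $|NS^+|=|R^+|+|R^0|$ (i.e.\ \rref{epcmtc2}) and the invertibility of $(B\mid D)$.

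For the orthogonality $B^TD=0$, I compute $(s^2)_{ij}$ for $i\in R^+$, $j\in R^0$ two ways. On one hand, $s^2$ is (up to a positive scalar) the permutation $k\mapsto k^*$. Since $V^-\boxtimes V^-\cong \mathbf{1}$ forces $(V^-)^*\cong V^-$, duality commutes with $V^-\boxtimes -$, so the fixed-point set $R^0$ of $V^-\boxtimes -$ is preserved by duality. Hence $j\in R^0\Rightarrow j^*\in R^0$, so $j^*\neq i$ and $(s^2)_{ij}=0$. On the other hand, expanding $(s^2)_{ij}=\sum_k s_{ik}s_{kj}$ and splitting $k$ over the five label types: the contributions from $k\in R^+\cup R^-\cup R^0$ vanish because $s_{kj}=0$ whenever $k\in R$ and $j\in R^0$ (bottom row/right column of Table \rref{etable}); the $NS^+$ term equals $(B^TD)_{ij}$ on the nose; and the $NS^-$ term equals $(B^TD)_{ij}$ as well, because the two sign flips from \rref{ecmtci} (one from the $-B$ block, one from the $-D$ block) cancel. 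Thus $2(B^TD)_{ij}=(s^2)_{ij}=0$.

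The main obstacle is the sign bookkeeping. Getting the parity-basis computation right requires using \rref{ecmtci} in both indices (applying symmetry to transfer a bar from first to second slot), and in the $s^2$ calculation one must carefully verify that the $NS^-$ contribution matches the $NS^+$ contribution rather than cancelling it. Everything else is routine linear algebra once the block form of $\tilde s$ is in hand.
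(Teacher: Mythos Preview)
Your proof is correct and follows essentially the same strategy as the paper: block-decompose the $s$-matrix using the sign pattern of Lemma~\ref{cmtc5} to read off the non-singularity of $A$, $C$, $(B\mid D)$ and the dimension identity \rref{epcmtc2}, and then use $s^2\propto$ charge conjugation together with the invariance of $R^0$ under duality to obtain $B^TD=0$. The only difference is cosmetic: the paper performs unspecified row and column operations to reach a block form, whereas you use an explicit orthogonal change of basis diagonalizing the $\Z/2$-action of $V^-\boxtimes -$, and you spell out the $(s^2)_{ij}$ expansion in detail where the paper leaves it implicit.
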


\begin{proof}
The s-matrix is symmetrical, hence so are the matrices $A$, $C$.
By performing row and column operations on the matrix \rref{etable}, we may obtain
the matrix
$$
\left(
\begin{array}{lcrrr}
0&0&0&B&D\\
0& 4A & 0&0&0\\
0& 0& C & 0 & 0\\
B^T &0&0&0&0\\
D^T &0&0&0&0
\end{array}
\right)
$$
Thus, the statements about non-singularity of matrices follow, since the s-matrix is
non-singular. Therefore, \rref{epcmtc2} follows. To prove \rref{epcmtc1}, recall
that the square of the s-matrix is a scalar multiple of the charge conjugation matrix,
and note that obviously, $R^0$ and $R^\pm$ are invariant under the operation
of taking contragredient labels.
\end{proof}

\vspace{3mm}
\begin{lemma}
\label{lcmf1}
The following diagram commutes:
$$
\diagram
V^-\boxtimes V^-\boxtimes V^-\dto^{\cong}_{1\boxtimes \iota}\rto^(.6){\iota\boxtimes 1}_(.6){\cong}
&
1\boxtimes V^-\dto^{\eta}_{\cong}\\
V^-\boxtimes 1\rto^{\cong}_{\eta} & V^-
\enddiagram
$$
where $\eta$ is the unit coherence isomorphism.
\end{lemma}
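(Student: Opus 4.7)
My plan is to invoke MacLane coherence to replace $C$ by an equivalent strict monoidal category; the unit coherence isomorphisms $\eta$ then become identities, and the commutativity of the displayed square collapses to the single equality
$$\iota\boxtimes 1_{V^-}\;=\;1_{V^-}\boxtimes \iota\colon V^-\boxtimes V^-\boxtimes V^-\longrightarrow V^-.$$

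The key tool will be naturality of the braiding $\sigma$ applied to the pair of morphisms $(\iota,\,1_{V^-})$, which yields
$$\sigma_{1,\,V^-}\circ(\iota\boxtimes 1_{V^-})\;=\;(1_{V^-}\boxtimes \iota)\circ \sigma_{V^-\boxtimes V^-,\,V^-}.$$
Since the braiding against the unit object is the identity, it suffices to show $\sigma_{V^-\boxtimes V^-,\,V^-}=\mathrm{id}_{V^-\boxtimes V^-\boxtimes V^-}$. I will then expand this morphism via the hexagon axiom as
$$\sigma_{V^-\boxtimes V^-,\,V^-}\;=\;(\sigma_{V^-,V^-}\boxtimes 1_{V^-})\circ(1_{V^-}\boxtimes\sigma_{V^-,V^-}).$$

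Because $V^-\boxtimes V^-\cong 1$ is simple, $\mathrm{End}(V^-\boxtimes V^-)\cong \C$, so $\sigma_{V^-,V^-}=\epsilon\cdot\mathrm{id}$ for some scalar $\epsilon\in \C^\times$; Lemma \ref{lcmtc1} then forces $\epsilon^2=1$. By $\C$-bilinearity of $\boxtimes$, each of the two tensored factors in the hexagon expansion equals $\epsilon\cdot\mathrm{id}$ on $V^-\boxtimes V^-\boxtimes V^-$, so their composite is $\epsilon^2\cdot\mathrm{id}=\mathrm{id}$, completing the argument.

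The only bookkeeping obstacle I anticipate is tracking the associators and unit coherence isomorphisms carefully in the non-strict form of the argument, but this is handled cleanly by MacLane coherence and does not affect the conclusion. Note that no use of the specific value $\theta_{V^-}=-1$ beyond the identity $\sigma_{V^-,V^-}^2=1$ already extracted in Lemma \ref{lcmtc1} is required; the lemma is thus purely a consequence of the invertibility of $V^-$ together with the braided structure.
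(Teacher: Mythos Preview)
Your proof is correct and follows essentially the same route as the paper: the paper also reduces the claim to the identity $(\sigma_{V^-V^-}\boxtimes 1)(1\boxtimes\sigma_{V^-V^-})=\mathrm{id}$ via the braiding (hexagon) relation and naturality, using that $\sigma_{V^-V^-}=\pm 1$ by Lemma~\ref{lcmtc1} and irreducibility. Your invocation of MacLane coherence and the explicit spelling-out of the naturality square are more verbose than the paper's one-line computation, but the underlying argument is identical.
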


\begin{proof}
Recall that by Lemma \ref{lcmtc1} and irreducibility, $\sigma_{V^-V^-}=\pm 1$. Thus,
by the braiding relation,
$$\eta(\iota\boxtimes 1)=\eta(1\boxtimes\eta)(\sigma_{V^-V^-}\boxtimes 1)(1\boxtimes
\sigma_{V^-V^-})=\eta(1\boxtimes\iota).$$
\end{proof}

\vspace{3mm}

\begin{lemma}
\label{lcmf2}
Let $X\in R^0$. Then there exists an isomorphism 
$$\theta:V^-\boxtimes X\r X$$
so that the following diagram commutes:
$$
\diagram
V^-\boxtimes V^-\boxtimes X\dto_{\iota}\rto^(.6){1\boxtimes \theta}
&
 V^-\boxtimes X\dto^{-\theta}\\
1\boxtimes X\rto_{\eta} & X.
\enddiagram
$$
\end{lemma}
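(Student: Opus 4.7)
The plan is a short rescaling argument using Schur's lemma twice. Since $X \in R^0$, we have $V^-\boxtimes X \cong X$, and because $X$ is irreducible, Schur's lemma yields $\dim \operatorname{Hom}_C(V^-\boxtimes X, X) = 1$. In particular, an isomorphism $\theta' \colon V^-\boxtimes X \to X$ exists, and any two such differ by a scalar in $\C^\times$.

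Next, I would compare the two isomorphisms $V^-\boxtimes V^-\boxtimes X \to X$ appearing in the diagram, namely $\theta' \circ (1_{V^-}\boxtimes \theta')$ and $\eta \circ (\iota \boxtimes 1_X)$. Applying Schur once more to the irreducible $X$, these two isomorphisms must agree up to a unique scalar $\lambda \in \C^\times$; that is, $\theta' \circ (1_{V^-}\boxtimes \theta') = \lambda \cdot \eta \circ (\iota \boxtimes 1_X)$.

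The final step is to absorb $\lambda$ by rescaling. Replacing $\theta'$ by $\theta = \mu \theta'$ for $\mu \in \C^\times$ multiplies the top-right composition by $\mu^2$ (since $\theta$ appears twice), so the scalar becomes $\mu^2 \lambda$. Choosing $\mu$ to be a square root of $-\lambda^{-1}$, which exists because $\C$ is algebraically closed, yields $\theta \circ (1_{V^-}\boxtimes \theta) = -\eta \circ (\iota \boxtimes 1_X)$. This is exactly the commutativity of the displayed diagram with right vertical $-\theta$.

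There is no genuine obstacle here; the essential content of the lemma is simply that $\theta$ enters the relevant path quadratically, so any prescribed nonzero constant, and in particular $-1$, can be realized by rescaling. The sign $-1$ itself is not forced by the categorical data in any rigid way — any nonzero scalar would be attainable — but it is the natural Ramond normalization, consistent with $\zeta_X = -1$ for $X \in R$ (Lemma \ref{lcmtc4}), and it is presumably this identification with the \emph{Ramond sign} that motivates the chosen convention and makes the lemma useful for the Clifford structure built in the subsequent constructions.
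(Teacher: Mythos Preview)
Your proposal is correct and is essentially the same argument as the paper's: the paper simply says that by irreducibility the diagram commutes up to a nonzero scalar $\lambda$, and then replaces $\theta$ by $\theta/\sqrt{\lambda}$. Your version spells out the two uses of Schur's lemma and the quadratic dependence on $\theta$ more explicitly, but the content is identical.
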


\begin{proof}
By irreducibility, the diagram commutes up to multiplication by a non-zero complex
number $\lambda$. Hence, it suffices to replace $\theta$ by $\theta/\sqrt{\lambda}$.
\end{proof}

\vspace{3mm}
From now on, we will assume that a choice of $\theta$ has been made as in Lemma \ref{lcmf2}.

\vspace{3mm}
\noindent
{\bf Definition:} A pre-Clifford modular tensor category is called a {\em Clifford tensor
category} if
\beg{ecmtccliff}{\sigma_{V^-V^-}=-1.
}

\vspace{3mm}
\noindent
{\bf Remarks:} 1. We do not know whether there exists pre-Clifford modular tensor
categories in which both $R^0\neq\emptyset$ and $R^\pm\neq \emptyset$.

2. In the remainder of this Section, we will produce a Clifford projective modular functor
from a Clifford tensor category. The assumption \rref{ecmtccliff} is
essential to our arguments.
The alternative to \rref{ecmtccliff} is, by Lemma \ref{lcmtc1}, 
\beg{ecmtcsqrt}{\sigma_{V^-V^-}=1.}
In the paper \cite{spin}, there naturally appeared, as an alternative to Spin structure
on Riemann surfaces something called the ``Sqrt structure'', which is, vaguely speaking,
an ``untwisted analogue'' of Spin structure. We believe that replacing \rref{ecmtccliff}
with \rref{ecmtcsqrt} could be used to produce a notion of a modular functor
on Riemann surfaces with Sqrt structure, but do not follow this direction in detail,
since it appears to be currently of lesser importance from the point of view of examples.

\vspace{3mm}
The remainder of this section from this point on is dedicated to constructing a projective Clifford
modular functor on the weak $\star$-category of Riemann surfaces with Spin structure
from a Clifford modular tensor category $C$. Of course $C$, being a modular tensor
category, by the construction
of \cite{bk}, Chapter 5, in particular defines an ordinary projective modular functor $M$ on
the weak $\star$-category of Riemann surfaces (without Spin structure). This phenomenon
is well known in mathematical physics. For example, in \cite{fqs}, the modular functor $M$
is referred to as the {\em spin model}. Later, it became more widely known as the
{\em GSO projection}.

We denote by $S(X_1,\dots,X_n)$ a standard sphere (\cite{bk}, Section 5.2) with $n$
punctures $\{1,\dots,n\}$ oriented outbound, labelled by $n$ irreducible objects 
$X_1,\dots, X_n\in Obj(C)$. Denote, for an irreducible object $X\in Obj(C)$,
$\epsilon\in \Z/2$,
$$X(\epsilon)=\left\{
\begin{array}{rl}
X &\text{if $\epsilon=0$}\\
V^-\boxtimes X & \text{if $\epsilon=1$.}
\end{array}\right.
$$
Put
$$\begin{array}{l}\widetilde{M}(S(X_1,\dots,X_n))\\[2ex]
=\displaystyle\bigoplus_{\begin{array}[t]{c}
\epsilon_i\in\Z/2\\i=1,\dots,n
\end{array}}M(X_1(\epsilon_1),\dots,X_n(\epsilon_n))\\[8ex]
=\displaystyle\bigoplus_{\begin{array}[t]{c}
\epsilon_i\in\Z/2\\i=1,\dots,n
\end{array}}Hom_C(1,X_1(\epsilon_1)\boxtimes\dots\boxtimes X_n(\epsilon_n))).
\end{array}
$$
We may consider $\widetilde{M}(X_1,\dots,X_n)$ as a $(\Z/2)^n$-graded vector space by 
$(\epsilon_1,\dots,\epsilon_n)$. We may also consider a ``total'' $\Z/2$-grading by 
$\epsilon_1+\dots+\epsilon_n$. Denote by
$1\leq i_1<\dots<i_k\leq n$ all those indices $i$ such
that $X_i$ We will construct commuting involutions
$\alpha_i$, $i=1,\dots,n-1$ of $(\Z/2)^n$-degree 
$$(\underbrace{0,0,\dots,0}_{i-1},1,1,0,\dots,0),
$$
and {\em anticommuting} involutions $\lambda_j$ of $(\Z/2)^n$-degree
$$(\underbrace{0,0,\dots,0}_{i_j-1},1,0,\dots,0),
$$
where $\alpha_i$, $\beta_j$ commute for any $i=1,\dots,n$, $j=1,\dots, k$.

\vspace{3mm}
The operator $\alpha_i$ is of the form
$$\begin{array}{l}
\bigoplus Hom_C(1,\\
Id_{X_1(\epsilon_1)}\boxtimes\dots\boxtimes
Id_{X_{i-1}(\epsilon_{i-1})}\boxtimes q
\boxtimes Id_{X_{i+2}(\epsilon_{i+2})}\boxtimes\dots\boxtimes Id_{X_{n}(\epsilon_{n})})
\end{array}$$
where 
$$q:X(\epsilon_1)\boxtimes Y(\epsilon_2)\r X(\epsilon_1+1)\boxtimes Y(\epsilon_2+1)$$
is given, according to the different values of $\epsilon_1,\epsilon_2\in \Z/2$, as follows:
$$
\diagram
X\boxtimes Y\rrto^q\drto_{\iota^{-1}\boxtimes 1} &&V^-\boxtimes X\boxtimes V^-\boxtimes Y\\
&V^-\boxtimes V^-\boxtimes X\boxtimes Y\urto_{1\boxtimes\sigma_{V^-X}\boxtimes 1}&
\enddiagram
$$

$$
\diagram
X\boxtimes V^-\boxtimes Y\dto_{\iota^{-1}\boxtimes 1\boxtimes 1}\rrto^q &&
V^-\boxtimes X\boxtimes Y\\
V^-\boxtimes V^-\boxtimes X\boxtimes V^-\boxtimes Y\rrto_{1\boxtimes \sigma_{V^-X}\boxtimes 1
\boxtimes 1}
&&
V^-\boxtimes X\boxtimes V^-\boxtimes V^-\boxtimes Y\uto_{1\boxtimes 1\boxtimes\iota \boxtimes 1}
\enddiagram
$$

$$
\diagram
V^-\boxtimes X\boxtimes Y\rrto^{q=\sigma_{V^-X}\boxtimes 1}&&V\boxtimes V^-\boxtimes Y
\enddiagram
$$

$$
\diagram
V^-\boxtimes X\boxtimes V^-\boxtimes Y\rrto^q\drto_{\sigma_{V^-X}\boxtimes 1\boxtimes1}
&&
X\boxtimes Y\\
& X\boxtimes V^-\boxtimes V^-\boxtimes Y.\urto_{1\boxtimes \iota\boxtimes 1}&
\enddiagram
$$

\noindent
The operator $\lambda_j$ is
$$
\bigoplus (-1)^{(\sum_{s=1}^{i_j-1}\epsilon_s)}
Hom_C(1,
\underbrace{1\boxtimes\dots\boxtimes 1}_{\text{$i_j-1$ times}}
\boxtimes \lambda\boxtimes 1\boxtimes\dots\boxtimes 1)
$$
where for $X\in R^0$, $\epsilon\in \Z/2$,
$$ 
\lambda: X(\epsilon)\r X(\epsilon+1)
$$
is given as follows:
$$
\diagram
V^-\boxtimes X\rrto^{\lambda=\theta}&&X
\enddiagram
$$

$$
\diagram
X\rrto^{\lambda=\theta^{-1}}
\dto_{\alpha\boxtimes 1} && V^-\boxtimes X\\
V^-\boxtimes V^-\boxtimes X\rrto_{\sigma_{V^-V^-}\boxtimes X}
&&
V^-\boxtimes V^-\boxtimes X\uto^{1\boxtimes \theta}
\enddiagram
$$
(this diagram commutes by Lemma \ref{lcmf2}.

\vspace{3mm}

\begin{lemma}
\label{lclifcom}
The operators $\iota_i$, $i=1\dots n$ commute and satisfy $(\iota_i)^2=1$. The
operators $\lambda_j$, $j=1,\dots, k$ anticommute and satisfy $(\lambda_j)^2=1$.
Moreover, every operator $\alpha_i$ commutes with every operator $\lambda_j$.
\end{lemma}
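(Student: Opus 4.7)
The overall strategy is to decompose each relation into a sign-level bookkeeping on the scalar prefactors $(-1)^{\sum_{s<i_j}\epsilon_s}$ and a morphism-level identity inside $C$ built from $\iota$, $\theta$, associators, and the braiding $\sigma$. The sign-level part reduces, in every case, to elementary arithmetic modulo $2$ counting which $\epsilon_s$ are toggled by the operator in question. The morphism-level part reduces to coherence diagrams in $C$, for which the available tools are Lemmas~\ref{lcmtc1}--\ref{lcmf2}, hexagon and pentagon coherence for the braided tensor structure, and the Clifford hypothesis $\sigma_{V^-V^-}=-1$.

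For $(\alpha_i)^2=1$ I would check the four grading sectors $(\epsilon_i,\epsilon_{i+1})\in(\mathbb{Z}/2)^2$ separately. In each sector the composite $q\circ q$ is a short string of $\iota^{\pm 1}$'s interleaved with braidings acting on $V^-\boxtimes V^-\boxtimes X_i\boxtimes X_{i+1}$; collapsing an adjacent pair $\iota\circ\iota^{-1}$ via Lemma~\ref{lcmf1} and reducing the remaining double $V^-$-braiding by $(\sigma_{V^-V^-})^2=1$ (Lemma~\ref{lcmtc1}) leaves the identity. For $(\lambda_j)^2=1$ the scalar squares to $1$ since $\lambda_j$ does not alter $\epsilon_s$ for $s<i_j$; at the morphism level the two composites $X\to V^-\boxtimes X\to X$ and $V^-\boxtimes X\to X\to V^-\boxtimes X$ become $\theta\circ\theta^{-1}$ after applying the two defining diagrams of $\lambda$ in Lemma~\ref{lcmf2}, and the $-1$ appearing there is precisely absorbed by the Clifford identity $\sigma_{V^-V^-}=-1$.

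The relation $\alpha_i\alpha_j=\alpha_j\alpha_i$ for $|i-j|\ge 2$ is immediate since the operators act on disjoint factors. For $|i-j|=1$, case analysis on the eight grading sectors of positions $i,i+1,i+2$ reduces the identity to a composition of hexagons for $\sigma_{V^-,-}$, the pentagon for the associator, Lemma~\ref{lcmf1}, and a final cancellation via $(\sigma_{V^-V^-})^2=1$. The anticommutation $\lambda_i\lambda_j=-\lambda_j\lambda_i$ (say $i<j$, hence $i_i<i_j$) reduces to the sign calculation: the morphism parts commute strictly because they act on disjoint factors, while the prefactor of $\lambda_j$ sees the toggle of $\epsilon_{i_i}$ performed by $\lambda_i$ (since $i_i<i_j$), whereas the prefactor of $\lambda_i$ does not see the toggle of $\epsilon_{i_j}$, producing the missing $-1$.

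Finally, commutation of $\alpha_i$ with $\lambda_j$ in the case $i_j\notin\{i,i+1\}$ is automatic, with the prefactor preserved because $\alpha_i$ toggles the two consecutive indices $i,i+1$, which either both lie in or both lie outside the range $s\le i_j-1$. The substantive cases are $i_j\in\{i,i+1\}$, and this is where I expect the main obstacle to lie. If $i_j=i$ the prefactor is unaffected, but I must verify a morphism-level identity saying that the $q$-map on $X_i\boxtimes X_{i+1}$ commutes with $\theta$ acting on the $X_i$ component, which should follow from naturality of the braiding together with the fact that $X_{i_j}\in R^0$. If $i_j=i+1$ the prefactor of $\lambda_j$ is shifted by $-1$ by $\alpha_i$ (because $i\le i_j-1$ while $i+1\not\le i_j-1$), and this extra $-1$ must be reproduced at the morphism level. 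I would close this case by writing out both composites explicitly in each $(\epsilon_i,\epsilon_{i+1})$ sector and invoking the defining property of $\theta$ from Lemma~\ref{lcmf2}, whose built-in $-1$ is expected to be exactly what cancels the prefactor discrepancy, paralleling its role in the $(\lambda_j)^2=1$ computation.
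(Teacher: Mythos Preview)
Your proposal is correct and follows essentially the same approach as the paper: the paper's proof is the single sentence ``A straightforward computation using Lemmas~\ref{lcmf1}, \ref{lcmf2} and the braiding relation,'' and your outline is a faithful (and considerably more detailed) unpacking of exactly that computation, identifying the same key inputs---the coherence of $\iota$ (Lemma~\ref{lcmf1}), the $\theta$-diagram with its built-in $-1$ (Lemma~\ref{lcmf2}), the hexagon/braiding relations, and the Clifford hypothesis $\sigma_{V^-V^-}=-1$---and correctly locating where the signs arise in the $\lambda_j$-prefactors versus the morphism level.
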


\begin{proof}
A straightforward computation using Lemmas \ref{lcmf1}, \ref{lcmf2} and the braiding
relation.
\end{proof}

\vspace{3mm}
Now note that each of the involutions $\alpha_i$, since it is $\Z/2$-graded of odd
degree with respect to grading by the $i$'th copy of $\Z/2$, is diagonalizable,
and half of its eigenvalues are $+1$, half are $-1$. Moreover, since $\alpha_i$
commute, they are simultaneously diagonalizable.

Moreover, each $\alpha_1\times\dots\times \alpha_{n-1}$-weight
$(w_1,\dots,w_{n-1})\in (\Z/2)^{n-1}$ corresponds to a spin structure 
on $S(X_1,\dots,X_n)$ as follows: On the boundary circle of the $i$'th
puncture, put an antiperiodic (resp. periodic) Spin structure depending
on whether $X_i$ is NS (or R). Furthermore, identify the spinors on all of the
points $P_1,\dots, P_n$ of each of the circle of lowest imaginary part with $\R$. 
On the path from $P_i$ to $P_{i+1}$ along the marking graph
of $S(X_1,\dots, X_n)$ (see \cite{bk}, Section 5.2), put the antiperiodic
resp. periodic Spin structure depending on whether $w_i=-1$ or $w_i=1$.
Denote the resulting spin structure on $S(X_1,\dots,X_n)$ by
$\sigma(w_1,\dots, w_n)$.

Let
$$M({S(X_1,\dots,X_n),\sigma(w_1,\dots,w_{n-1})})$$
be the $(w_1,\dots,w_{n-1})$ weight space of
$$\widetilde{M}({S(X_1,\dots,X_n),\sigma(w_1,\dots,w_{n-1})})$$
with respect to $(\alpha_1,\dots,\alpha_{n-1})$, considered as
a left module over
\beg{ecmflambda}{\Lambda= T_\C(\lambda_1,\dots,\lambda_j)/(\lambda_i\lambda_k=
-\lambda_k\lambda_i,\lambda_{i}^{2}=1)}
(where $T_\C$ denotes the $\C$-tensor algebra on the given generators).

In discussing the passage from a modular tensor category to a modular functor,
\cite{bk} do not discuss orientation of boundary components in detail. This
is because reversal of orientation of a boundary component can be
always accomplished by changing a label to its contragredient label.

In the Spin case, however, we need to be more careful because reversal
of orientation of a periodic boundary component does not carry 
a canonical Spin structure: A cylinder with two inbound (or two outbound)
boundary components has two different possible Spin structures which
are interchanged by a diffeomorphism interchanging te boundary
components. 

To discuss reversal of orientation, recall that in the standard sphere
$S(X_1,\dots,X_n)$ with a spin structure $\sigma=\sigma(w_1,\dots,w_{n-1})$,
the boundary components decorated by the labels $X_1,\dots, X_{n-1}$ 
were oriented outbound. We may create a {\em mirror} sphere 
$\overline{S}(X_{1}^{*},\dots,X_{n}^{*})$ 
(together with a canonical Spin structure $\overline{\sigma}$) 
by reflecting by the imaginary axis, and replacing labels
by contragredient ones. (Note: To facilitate gluing, we only need to consider
$n=2$.)

Let
$$M(\overline{S}(X_{1}^{*},\dots,X_{n}^{*}),\overline{\sigma})
=Hom_{\Lambda}(M(S(X_1,\dots,X_n),\sigma),\Lambda)$$
where $\Lambda$ is the Clifford algebra \rref{ecmflambda}. Thus,
$M(\overline{S}(X_{1}^{*},\dots,X_{n}^{*}),\overline{\sigma})$ is
naturally a {\em right} $\Lambda$-module, hence a left $\Lambda^{Op}$-module).
Note that $\Lambda^{Op}$ is isomorphic to $\Lambda$, but not canonically. In fact,
using the Koszul signs, we have, canonically,
$$(T_\C(\alpha)/\alpha^2=1)^{Op}=T_\C(\alpha^*)/((\alpha^*)^2=-1).$$
However, for every $\C$-algebra $A$, $A\otimes A^{Op}$ has a canonical bimodule
(naturally identified with $A$) from either side, and applying this to $\Lambda$ facilitates
gluing of an inbound and outbound boundary component with Spin structure.

Now the effect of gluing and moves on $M(\Sigma,\sigma)$ for labelled 
surfaces $\Sigma$ with spin structure $\sigma$ (up to scalar multiple)
follows from the corresponding statement on the GSO projection, taking
into account the change of Spin structure caused by the move.
(This is why we don't need a separate ``lego game'' for surfaces with Spin structure.)
All the statements are straightforward consequences of the definition, and we omit
the details.

\vspace{3mm}
One case, however, warrants special discussion, namely the S-move. We have proved
above in Proposition \ref{pcmtc1} that the s-matrices corresponding to elliptic
curves of Kervaire invariant $0$ (NS-NS and NS-R) are non-singular. In the case
of the elliptic curve of Kervaire invariant $1$ (R-R), we only know that the $R^+$-$R^+$
s-matrix is non-singular, while the rest of the s-matrix is zero!

To explain this effect, note that when gluing 
$$S(X_1,X_2),\sigma$$ 
to
$$\overline{S}(X_{1}^{*},X_{2}^{*}),\overline{\sigma}$$ 
where $X_1,X_2$ are non-split Ramond
(note: we necessarily have $X_1=X_{2}^{*}$), the curve spanned by the two marking
graphs of $S(X_1,X_2),\sigma$ and $\overline{S}(X_{1}^{*},X_{2}^{*}),\overline{\sigma}$ 
is antiperiodic, since we are gluing the boundary at the angle $\pi$ and not $0$.
The trace, in this case ,then, is a copy of $\C$ for each $R_0$ label: this is
the R-NS elliptic curve).

To obtain the R-R curve, we replace one of the Spin structures, say, $\overline{\sigma}$,
with the other possible Spin structure on $\overline{S}(X_{1}^{*},X_{2}^{*})$. This
results in the {\em reversal of signs} of the action of one of the generators $\Lambda_1$
or $\Lambda_2$ (depending on how exactly we identify the spinors at the points $P_1,P_2$, which
is also non-canonical).

In any case, one readily verifies that if we take the trace
after this modification of Spin structure, we get $0$. Thus, the vector space assigned by
our construction to the Kervaire invariant $1$ elliptic curve is, in fact, the free
$\C$-moduls on the set of $R^+$-labels!

\vspace{3mm}
\noindent
{\bf Remark:} There are examples of Clifford modular tensor categories
with split ($R^\pm$) labels (for example an even power of the chiral fermion), and examples
of Clifford modular tensor categories with non-split ($R^0$) labels (for example an odd
power of the chiral fermion). We do not know, however, an example of a Clifford
modular tensor category which would have both split and non-split R labels.

\vspace{5mm}

\section{Super vertex algebras, $N=1$ supersymmetric minimal models.}
\label{svoa}

In this section, we will describe how Clifford modular tensor categories (and hence
projective Clifford modular functors) may be obtained from super vertex 
algebras, and we will specifically discuss the example of $N=1$ supersymmetric
minimal models. For a definition of a super vertex algebra, and basic facts about
this concept, we refer the reader to Kac \cite{kac}. In this paper, we will
only consider (super) vertex algebras with a conformal element $L$ (also denoted
by $\omega$, cf. \cite{kac}). For a super vertex algebra $V$, we denote by $V^+$
(resp. $V^-$) the submodule of elements of weights in $\Z$ (resp. $(1/2) +\Z$).

\vspace{3mm}

\begin{theorem}
\label{tvoa}
Let $V$ be a super-vertex algebra with a conformal element $L$ which satisfies Huang's
conditions \cite{huang05}:

(1) $V_{<0}=0$, $V_{0}=\C$ and the contragredient module to $V$ is $V$.

(2) Every $V$-module is completely reducible.

(3) $V$ satisfies the $C_2$-condition. (Explicitly, the quotient of $V$ by the sum of
the images of $aV$ where $a$ is a coefficient of $z^{\geq 1}$ of the vertex operator $Y(u,z)$,
$u\in V$, is finite-dimensional.)

Then the category of finitely generated $V^+$-modules is a Clifford modular tensor category
(and consequently, by the construction of the last Section produces an example
of a projective Clifford modular
functor.)
\end{theorem}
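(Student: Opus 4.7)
The plan is to apply Huang's theorem \cite{huang05} to the even part $V^+$, obtaining a modular tensor category structure on its category $C$ of finitely generated modules, and then to identify $V^-$ (viewed as a $V^+$-module) as the distinguished object upgrading $C$ to a Clifford modular tensor category. The three axioms to check are (\ref{ecmodiso}), (\ref{ecmodminus}), and (\ref{ecmtccliff}).

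First, I verify that $V^+$ inherits Huang's three hypotheses from $V$. The weight decomposition $V^+ \subseteq V_\Z$, $V^- \subseteq V_{(1/2)+\Z}$ makes $(V^+)_{<0} = 0$ and $(V^+)_0 = \C$ immediate. The contragredient $V^*$ decomposes compatibly as $(V^+)^* \oplus (V^-)^*$, so self-contragredience of $V$ restricts to $(V^+)^* \cong V^+$ and $(V^-)^* \cong V^-$ as $V^+$-modules. Any $V^+$-module $M$ appears as a $V^+$-summand of an induced $V$-module, and complete reducibility of the latter passes to $M$. The $C_2$-quotient of $V^+$ embeds into that of $V$, yielding the $C_2$-condition. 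Huang's theorem then endows $C$ with the structure of a modular tensor category, with braiding $\sigma$ and ribbon twist $\theta_M = e^{2\pi i L_0}$ on any module $M$.

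Next, $V^-$ is a $V^+$-submodule of $V$, and it is simple (any nontrivial $V^+$-submodule would, together with $V^+$, generate a proper $V$-submodule, contradicting the irreducibility implicit in condition (1)). The vertex operator $Y$ of $V$, restricted to $V^- \otimes V^-$, lands in $V^+$ and defines a nonzero intertwining operator whose corresponding fusion morphism $\iota\colon V^- \boxtimes V^- \to V^+ = \mathbf{1}$ is nonzero. Since $V^-$ is simple and self-dual, rigidity of $C$ forces $\iota$ to be an isomorphism, giving (\ref{ecmodiso}). Condition (\ref{ecmodminus}) is immediate from the weight grading: on $V^-$, $L_0$ has eigenvalues in $(1/2) + \Z$, so $\theta_{V^-} = e^{2\pi i L_0}$ acts as $-1$.

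The main obstacle is distinguishing the Clifford condition $\sigma_{V^- V^-} = -1$ of (\ref{ecmtccliff}) from the Sqrt alternative $\sigma_{V^- V^-} = +1$ discussed in the Remarks. This sign is forced by the super-commutativity axiom defining $V$ as a super vertex algebra: for odd elements $u, v \in V^-$, one has
\[
Y(u,z)\, Y(v,w) \;=\; -\, Y(v,w)\, Y(u,z)
\]
in the sense of analytic continuation from the region $|z| > |w|$ to $|w| > |z|$. Huang's construction of the braiding on fusion products is defined precisely via this kind of monodromy of compositions of intertwining operators, so the fermionic sign descends to the scalar $-1$ acting on the invertible object $V^- \boxtimes V^- \cong \mathbf{1}$. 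Once this verification is complete, the construction of Section \ref{sexamp} applied to $C$ with the distinguished pair $(V^-,\iota)$ produces the desired projective Clifford modular functor, concluding the proof.
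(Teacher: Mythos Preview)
Your overall strategy matches the paper's, but two steps are genuine gaps rather than routine verifications.

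First, your descent of Huang's hypotheses (2) and (3) from $V$ to $V^+$ is not rigorous. The claim that the $C_2$-quotient of $V^+$ embeds into that of $V$ would require $C_2(V)\cap V^+ = C_2(V^+)$, which is not automatic: elements of the form $u_{-2}v$ with $u,v\in V^-$ lie in $C_2(V)\cap V^+$ but not obviously in $C_2(V^+)$. Likewise, complete reducibility of $V$-modules does not formally imply complete reducibility of $V^+$-modules; knowing that an irreducible $V$-module splits into finitely many irreducible $V^+$-modules is itself a nontrivial statement. The paper does not attempt a direct argument here but instead invokes Miyamoto's orbifold results \cite{mia1,mia2,mia3,mia4} for the $\Z/2$-action with fixed subalgebra $V^+$, which is the substantive input needed.

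Second, your argument that $\iota$ is an isomorphism is incomplete. Simplicity and self-duality of $V^-$ give $\dim\mathrm{Hom}(V^-\boxtimes V^-,\mathbf{1})=1$, but they do not by themselves force $V^-\boxtimes V^-$ to be simple; a priori there could be other summands in the kernel of $\iota$. The paper handles this differently: surjectivity follows because $V^+$ is irreducible and the image is a nonzero ideal, while injectivity is proved by contradiction, observing that $V\boxtimes V^-$ is a weak $V$-module, that $M=\mathrm{Ker}(\mu)$ would then be a $V$-module annihilated by $V^-$, and hence the $V$-annihilator of $M$ would be a nontrivial ideal of $V$.

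Your treatment of $\theta_{V^-}=-1$ and of the Clifford sign $\sigma_{V^-V^-}=-1$ via super-commutativity is reasonable and in fact more explicit than the paper, which leaves these points tacit.
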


\begin{proof}
Clearly, condition (1) passes on to $V^+$. Conditions (2) and (3) pass on to $V^+$ by the
results of Miyamoto \cite{mia1,mia2,mia3,mia4}, applying them to the case
of the $\Z/2$ acting on $V$ by $1$ on $V^+$ and $-1$ on $V^-$. (While Miyamoto does not discus
super vertex algebras explicitly, his arguments are unaffected by the generalization.)

Thus, it remains to prove that $V^-\boxtimes V^-\cong V^+$ in the category of $V^+$-modules.
The super vertex algebra structure gives a canonical map
\beg{esvoa1}{\diagram
V^-\boxtimes V^-\rto^\mu & V^+.
\enddiagram
}
The map must be onto since $V^+$ is an irreducible $V^+$-modules, and if the image
of \rref{esvoa1} were $0$, $V^-$ would be an ideal in $V$.

Suppose $\mu$ is not injective. Let $M=Ker(\mu)$. Note that any $V^+$-module $X$ with
a map $V^-\boxtimes X\r X$ which satisfies associativity with the $V^+$-module structure and the map
and the map \rref{esvoa1} is a weak $V$-module. (In this proof, $\boxtimes$ means the fusion
tensor product in the category of $V^+$-modules.) Thus, in particular,
$$V\boxtimes V^-$$
is a weak $V$-module, and the map 
$$\phi: V\boxtimes V^-\r V$$
given by right multiplication by the $V^+$-module $V^-$ is a map of weak $V^+$-modules. Consequently,
$M=Ker(\phi)$ is a weak $V^+$-module and hence, by dimensional considerations, a $V^+$-module.
Also for dimensional reasons, $V^-$ annihilates the $V$-module $M$. Hence, the $V$-annihilator
of $M$ is a non-trivial ideal in $V$, which is a contradiction. 

Thus, $M=0$ and $\mu$ is injective.
\end{proof}

\vspace{3mm}
\noindent
{\bf Example:} The $N=1$ supersymmetric minimal model is a super vertex algebra obtained
as a quotient $L_{p,q}$ of the Verma module $V(c_{p,q},0)$ of the $N=1$ Neveu Schwarz algebra $\mathcal{A}$
(for a definition, see e.g. \cite{gk})
by the maximal ideal, where
\beg{ecpq}{c_{p,q}=\frac{3}{2}\left(1-\frac{2(p-q)^2}{pq}
\right),}
$p,q\in \Z_{\geq2}$, $p\equiv q\mod 2$ and $gcd(p,(p-q)/2)=1$. Non-isomorphic irreducible NS (resp. R)
modules are given by the $N=1$ minimal models $L_{p,q,r,s}$ with central charge $c$, i.e.
quotients of the Verma module $V(c_{p,q},h_{r,s})$ over the NS (resp. R) algebra (for a definition
of the R-algebra, see e.g. \cite{ik}) where $1\leq r\leq p-1$, $1\leq r\leq q-1$, $r,s\in\Z$ and
$r\equiv s\mod 2$ (resp. $r+1\equiv s\mod 2$) and
$$h_{r,s}=\frac{(pr-qs)^2-(p-q)^2}{8pq}+\frac{\epsilon}{16}$$
where $\epsilon=0$ (resp. $\epsilon=1$). Additionally, we have
$$L_{p,q,r,s}\cong L_{p,q,p-r,q-s}$$
(which, for dimensional reasons, are the only possible isomoprhisms between these irreducible
modules). Thus, when both $p,q$ are odd, there are
$$\frac{(p-1)(q-1)}{4}\;\;\text{NS (resp. R) irreducible modules,}$$
and when $p,q$ are both even (in which case $p-q \equiv 2\mod 4$), there are
$$\frac{(p-1)(q-1)+1}{4}\;\;\text{NS (resp. R) irreducible modules}.$$
By a result of Zhu \cite{zhu}, a vertex algebra $V$ cannot have more irreducible modules
than the dimension of $V/C_2V$. Furthermore, when equality arises,
the Zhu algebra is a product of copies of $\C$, and hence is semisimple.
This is the case of $V=L_{p,q}$ by Theorem \ref{tc2}. Hence, we also know that 
the above list of irreducible modules $L_{p,q,r,s}$ is complete.

To prove the condition of complete reducibility (condition (2) of Theorem \ref{tvoa}),
it then suffices to show that
$$Ext^{1}(M,N)=0$$
for any two irreducible modules $M$ and $N$. This is proved in \cite{gk} for the case of NS modules.
Since there can only be non-trivial extensions if $M,N$ are both NS or both R, assume
that $M,N$ are both R (the argument we are about to give works in both cases).
Let $\mathcal{A}^{R}_{-}$ be the subalgebra of the Ramond algebra $\mathcal{A}^R$
spanned by $G_{\geq 0}$, $L_{\geq 0}$. Then we have a BGG resolution of $M$
by Verma modules
$$V_{h}=\mathcal{A}^{R}\otimes_{\mathcal{A}^{R}_{-}} V_{h}^{0}$$
where on $V_{h}^{0}$, $\mathcal{A}^{R}_{-}$ acts through its $0$ degree, and
$V^{0}_{h}$ is $2$-dimensional, with $L_0$ acting by $h$, and $G_0$ acting by
$\pm\sqrt{h}$ on the two basis elements. 

Then the BGG resolution of $M$ has the form
$$\dots\r\bigoplus_k V_{h_{i_2},k}\r\bigoplus_k V_{h_{i_1},k}\r V_h,$$
$$h_{i_j,k}>h,\; h_{i_j,k}\in h+\Z.$$
This leads to a spectral sequence
\beg{ebgg+}{E_{1}^{p,q}=\prod_k Ext^{q}_{\mathcal{A}^{R}_{-}}
(V_{h_{i_p},k},N)\Rightarrow Ext^{p+q}_{\mathcal{A}^R}(M,N).
}
For dimensional (integrality) reasons, \rref{ebgg+} can only be non-zero for $M=N$
and the only terms we need to worry about are $p=0,q=1$ and $p=1,q=0$.
The former is excluded by the fact that $h$ is the lowest weight of $M=N$, 
the latter by the fact that $N$ has no singular vectors. 

Thus, the asumptions are verified, and the $N=1$ supersymmetric minimal models
give examples to which Theorem \ref{tvoa} applies. Note that by Lemma \ref{lsingv},
(the parity of the number of $G$'s), it follows that all the R labels are split when
$(p-1)(q-1)$ is even, and non-split when $(p-1)(q-1)$ is odd.

\vspace{5mm}

\section{Appendix: Some technical results}\label{sapp}

We shall describe here some constructions needed to make rigorous the
results of the previous sections. We will start with May-Thomason rectification \cite{mt}.

\vspace{3mm}

\subsection{Rectification of weak multicategories}

The idea is to approach the problem much more generally. A {\em universal algebra of operadic
type $\mathcal{T}$} is allowed to have any set $I$ (possibly infinite) of $n_i$-ary operations
$\circ_i$ ($n_i\geq 0$ finite), $i\in I$ and any set $J$ (possibly infinite) of {\em relations} of
the form
\beg{eoper1}{w_j(x_1,\dots,x_{m_j})=w_{j}^{\prime}(x_1,\dots,x_{m_j}),\; j\in J
}
where $w_j$, $w_{j}^{\prime}$ are finite ``words'' one can write using the different variables
$x_1,\dots, x_{m_j}$ and the operations $\circ_i$, such that on both sides of \rref{eoper1}, every
variable $x_1,\dots,x_{m_j}$ occurs precisely once (some operations on the other hand
may be repeated, or may not occur at all). 

A $\mathcal{T}$-algebra then is a model of this universal algebra structure, 
i.e. a set with actually $n_i$-ary operations $\circ_i$, which satisfy the relations \rref{eoper1},
when we plug in concrete (possibly repeating) elements for $x_1,\dots,x_{m_j}$. 

For a universal algebra of operadic type $\mathcal{T}$, there exists a canonical operad
$\mathcal{C}_\mathcal{T}$ such that the category of $\mathcal{T}$-algebras is canonically
equivalent to the category of $\mathcal{C}_\mathcal{T}$-algebras. In fact, if we introduce
the smallest equivalence relation on words which is stable with respect to substitutions,
and such that the left and right hand sides of \rref{eoper1} are equivalent words, then
we have
\beg{eoper1a}{\mathcal{C}_{\mathcal{T}}(n)=\{ \text{equivalence classes of words in 
$x_1,\dots, x_n$}\}
}
We shall call the type $\mathcal{T}$ {\em free} if the operad $\Sigma_n$-action on
$\mathcal{C}_\mathcal{T}$ is free. 

\vspace{3mm}

If $\mathcal{T}$ is a universal algebra of operadic type, then a {\em weak $\mathcal{T}$-algebra}
is a groupoid with functorial operations $\circ_i$, $i\in I$ where each equality 
\rref{eoper1} is replaced by a natural isomorphism (called {\em coherence isomorphism}).
These coherence isomorphisms are required to form {\em coherence diagrams} which are
described as follows: Suppose we have a sequence $w_0,\dots, w_m$, 
$w_m=w_0$ of words in non-repeating variables $x_1,\dots, x_n$, each of which
is used exactly once, such that
\beg{eoper2}{\begin{array}{l}
w_{k}(x_1,\dots,x_n)= w(\dots,w_{j_k}(\dots)\dots)\\
w_{k}(x_1,\dots,x_n)= w(\dots,w_{j_k}^\prime(\dots)\dots),
\end{array}
}
$k=0,\dots, m-1$
(i.e. at each step, we make a change along \rref{eoper1}, combined with substitutions
(which means we can substitute into the variables inside the word, or the whole word
may be used as a variable for further operations, as long as, again, every variable
$x_1,\dots, x_n$ ends up used exactly once in the whole word). Then there is
an obvious coherence diagram modelled on the sequence $w_1,\dots,w_m$.

\vspace{3mm}
If $\mathcal{T}$ is a free operadic type of universal algebras, then a weak $\mathcal{T}$-algebra
can be rectified into a $\mathcal{T}$-algebra by the following construction due to May
and Thomason \cite{mt}: First, consider the (strict) operad enriched in groupoids $\mathcal{C}^\prime$
which is the free operad $\mathcal{O}_\mathcal{C}$ on the sequence of sets $(\mathcal{C}(n))_{n\geq 0}$,
where we put precisely one isomorphism 
$$x\cong x^\prime$$
for $x,x^\prime\in \mathcal{O}_{\mathcal{C}}(n)$ such that
$$\epsilon(x)=\epsilon(x^\prime)\in \mathcal{C}(n)$$
where $\epsilon:\mathcal{O}_{\mathcal{C}}\r\mathcal{C}$ is the counit of the adjunction between
the forgetful functor from operads to sequences and the free operad functor.

Now denoting by $|\mathcal{C}^\prime|(n)$ the nerve (bar construction) on $\mathcal{C}^\prime(n)$,
$\epsilon$ induces a map of operads
\beg{eoper3}{\iota:|\mathcal{C}^\prime|\r\mathcal{C}
}
which is an equivalence on each $n$-level. If $\mathcal{T}$ is of free type, \rref{eoper3}
induces an equivalence of monads
$$C^\prime \r C$$
where $C$ is the monad associated with $\mathcal{C}$, and $C^\prime$ is the monad
associated with $|\mathcal{C}^\prime|$; recall that for an operad $\mathcal{C}$, the
associated monad is
\beg{eoper4}{C(X)=\coprod_{n\geq 0} \mathcal{C}(n)\times_{\Sigma_n}X^n.
}
Thus, the rectification of the nerve $|X|$ of a weak $\mathcal{C}$-algebra $X$ from
a $|\mathcal{C}^\prime|$-algebra to a $\mathcal{C}$-algebra is
$$
\diagram
B(C,C^\prime, X) &\B(C^\prime, C^\prime, X)\lto_\sim \rto^\sim & X.
\enddiagram
$$
Now the exact same discussion applies to {\em multisorted algebras of operadic type}.
This means that there is an additional set $K$ of objects and each $i\in I$ comes with an 
$n_i$-tuple $(k_1,\dots, k_{n_i})\in K^{n_i}$ of input objects, and an output object
$\ell_i\in K$. An algebra of the type then consists of sets $X_k$, $k\in K$
and operations using elements of the prescribed input sets, and producing an
element of the prescribed output set. Again, the operations must satisfy
the prescribed relations, which are of type \rref{eoper1} (the only difference being
that every $x_i$ is decorated with an object and one must keep track of objects
when applying the operations). 

The associated operad \rref{eoper1a} then becomes a multisorted operad with objects $K$
(which is actually the same thing as a multicategory with objects $K$). The associated monad
to a multisorted operad is in the category of $K$-tuples of sets, which can also
be considered as sets fibered over $K$ (i.e. sets together with a map into $K$).
Formula \rref{eoper4} is then correct with $\times$ replaced by $\times_K$, the
fibered product over $K$.

The definition of free operadic type remains the same, with $\Sigma_n$ replaced
by the isotropy group of a particular fibration $\{1,\dots,n\}\r K$.

Now for us, the main point is that multicategories with object set $B$ (over $Id:B\r B$)
are a multisorted algebra of operadic type, where the set of objects is
$$\coprod_{n\geq 0} B^{n+1}.$$
Furthermore, this type is free (as is readily verified by inspection of the axioms). Hence,
weak multicategories can be rectified into strict multicategories using the May-Thomason 
rectification. 

\vspace{3mm}
\subsection{Rectification of weak multimorphisms}\label{ssrect}

A {\em weak morphism of weak $\mathcal{T}$-algebras} has the same data as 
a morphism (i.e. a map of sets preserving the operations), but instead for each operation,
we have a coherence isomorphism. The coherence diagrams in this case are easier:
there is one coherence diagram for each relation \rref{eoper1}.

The construction described in the previous subsection is functorial, so it automatically rectifies a weak
morphism to a morphism. (In the multi-sorted case, it even handles a function on objects.)
In particular, a weak
multifunctor between weak multicategories is rectified into a strict one. 

Suppose now 
$$F:\mathcal{A}\r \mathcal{C}$$
is a weak multifunctor where $\mathcal{A}$ is a weak multicategory and $\mathcal{C}$
is a strict multicategory enriched in groupoids. Then we should be entitled to more information
(i.e. we should not be required to rectify the already strict multicategory $\mathcal{C}$).
In effect, this works out. Using $C$ and $C^\prime$ in the same meaning as above,
we get a morphism of $C^\prime$-algebras
$$B_2F:B_2\mathcal{A}\r B_2\mathcal{C}.$$
Hence, we obtain morphisms of $C$-algebras (i.e. multifunctors)
$$\diagram
B(C,C^\prime,B_2\mathcal{A})\rrto^{B(C,C^\prime,B_2F)} &&
B(C,C^\prime, B_2\mathcal{C})
\dto^{B(C,\epsilon,B_2\mathcal{C})}\\&&B(C,C,B_2\mathcal{C})\dto\\
&&B_2\mathcal{C}
\enddiagram
$$
where the last map is the usual simplicial contraction.

\vspace{3mm}

\subsection{The topological Joyal-Street construction}

In this paper we have to deal with categories where both the sets of objects and
morphisms are topological spaces. The appropriate setting then is the notion of
a {\em T-category}, by which we mean a category $C$ where both the sets of objects
and morphisms are (compactly generated) topological spaces, $S,T:Mor(C)\r Ob(C)$
are fibrations and the unit and composition are continuous. In fact, for simplicity,
let us assume that $C$ is a groupoid, by which we mean
that there is an inverse operation $Mor(C)\r Mor(C)$ which is continuous. 
Then we shall speak of a $T$-groupoid.

Next, a {\em symmetric monoidal T-category} is a T-category $C$ with a continuous
functor $\oplus:C\times C\r C$ (continuous on both objects and morphisms)
and a unit $0\in Obj(C)$ satisfying the usual axioms of a symmetric monoidal
category, with the coherence natural transformations continuous (as maps
$Obj(C)\r Mor(C)$). A {\em permutative T-category} is a symmetric monoidal 
T-category where $\oplus$ is strictly associative unital.

\vspace{3mm}
\begin{proposition}\label{papp}
Let $C$ be a symmetric monoidal T-groupoid. Then there exists a 
permutative T-groupoid $C^\prime$ and a continuous weakly symmetric monoidal
functor (with continuous coherences)
$$\Gamma:C\r C^\prime$$
which is a continuous equivalence of categories $T$-categories (i.e. has a continuous
inverse where the compositions are continuously isomorphic to the identities).
\end{proposition}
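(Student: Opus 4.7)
The plan is to adapt the classical Joyal--Street strictification, tracking continuity throughout. Define the T-groupoid $C'$ as follows. The space of objects is $\mathrm{Obj}(C') = \coprod_{n\geq 0} \mathrm{Obj}(C)^n$, so that objects are finite (possibly empty) sequences $\underline{a} = (a_1,\dots,a_n)$. Let $F_n : \mathrm{Obj}(C)^n \to \mathrm{Obj}(C)$ be the continuous iterated sum $F_n(a_1,\dots,a_n) = a_1 \oplus (a_2 \oplus (\cdots \oplus a_n))$, with $F_0 = 0$. Define the morphism space
\[
\mathrm{Mor}(C')_{n,m} \;=\; \mathrm{Obj}(C)^n \times_{\mathrm{Obj}(C)} \mathrm{Mor}(C) \times_{\mathrm{Obj}(C)} \mathrm{Obj}(C)^m,
\]
where the pullback uses $F_n$, $S$, $T$, $F_m$, so that a morphism $\underline{a}\to\underline{b}$ in $C'$ is a $C$-morphism $F_n(\underline{a}) \to F_m(\underline{b})$. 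Pullbacks being formed in compactly generated spaces, and $S,T$ being fibrations in $C$, we obtain source and target fibrations on $C'$. Composition, identity, and inverse in $C'$ are inherited from $C$ and are continuous.

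Equip $C'$ with the product $\oplus$ given by concatenation of sequences, which is strictly associative and strictly unital with the empty sequence as unit. On morphisms $f\colon \underline{a}\to\underline{b}$ and $g\colon \underline{a}'\to\underline{b}'$, set $f \oplus_{C'} g$ equal to the composite $F(\underline{a}*\underline{a}') \xrightarrow{\alpha} F(\underline{a}) \oplus F(\underline{a}') \xrightarrow{f \oplus g} F(\underline{b}) \oplus F(\underline{b}') \xrightarrow{\alpha^{-1}} F(\underline{b}*\underline{b}')$, where $\alpha$ is the canonical repackaging associator in $C$ and $*$ denotes concatenation. The symmetry $\sigma^{C'}_{\underline{a},\underline{a}'}$ is obtained analogously from the $C$-braiding $\sigma_{F(\underline{a}),F(\underline{a}')}$ conjugated by the same $\alpha$'s. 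Continuity of all these operations follows from continuity of the coherences of $C$ together with continuity of the $F_n$.

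Define $\Gamma : C \to C'$ by $\Gamma(a) = (a)$ on objects and $\Gamma(f) = f$ on morphisms, using $F_1(a) = a$; the weakly monoidal coherence $(a)\oplus (b) = (a,b) \to (a\oplus b) = \Gamma(a\oplus b)$ is the identity of $a\oplus b$ in $C$, continuous in $a,b$. A continuous quasi-inverse $\Psi : C' \to C$ sends $\underline{a}\mapsto F_n(\underline{a})$ and is the identity on morphisms. Then $\Psi\circ\Gamma = \mathrm{id}_C$ strictly, while $\Gamma\circ\Psi$ sends $\underline{a}$ to $(F_n(\underline{a}))$; connecting these by the identity morphism of $F_n(\underline{a})$ in $C$, viewed as a $C'$-morphism, gives a continuous natural isomorphism. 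Hence $\Gamma$ is a continuous equivalence of T-categories.

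The main obstacle is topological bookkeeping rather than new algebra: one must form all pullbacks in compactly generated weak Hausdorff spaces so that $S,T$ remain fibrations on $C'$, and verify that every structural map built by pasting coherences of $C$ depends continuously on its object parameters. Both reduce to the standing hypotheses that $S,T$ in $C$ are fibrations and that the coherence natural transformations of $C$ are continuous as maps $\mathrm{Obj}(C)^k \to \mathrm{Mor}(C)$, so that the pentagon and hexagon axioms for $C'$ follow strictly from the coherence diagrams in $C$ once the associators have been absorbed into the definition of $\oplus_{C'}$ on morphisms.
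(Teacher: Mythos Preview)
Your proof is correct, but it uses a genuinely different strictification from the one in the paper. You build $C'$ as the ``free monoid'' category: objects are finite sequences of objects of $C$, morphisms are $C$-morphisms between the iterated $\oplus$-sums, and the monoidal product is concatenation. This is the Mac Lane/Isbell strictification. The paper instead uses the Cayley-type construction (which is what Joyal--Street actually do): objects of $C'$ are continuous endofunctors $E:C\to C$ equipped with a continuous natural isomorphism $(E?)\oplus ?\cong E(?\oplus ?)$, morphisms are natural isomorphisms compatible with this data, and $\Gamma$ sends $X\mapsto (X\oplus ?)$, with inverse $E\mapsto E(0)$. The paper then identifies $\mathrm{Obj}(C')$ and $\mathrm{Mor}(C')$ explicitly as pullbacks of mapping spaces, from which the fibration condition on $S_{C'},T_{C'}$ follows directly from the fact that $\mathrm{Map}(\mathrm{Id},S)$ is a fibration.

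What each approach buys: your sequence construction is more elementary and keeps the object space very concrete ($\coprod_n \mathrm{Obj}(C)^n$), at the cost of having to invoke Mac Lane coherence to check strict associativity of $\oplus_{C'}$ on morphisms and to verify the fibration property of $S',T'$ componentwise (your one-line justification here is a bit thin; it works because in a groupoid one can transport via $f\circ g_t^{-1}$ along a lifted identity path, but this deserves a sentence). The paper's endofunctor construction packages the coherence into the definition of the objects themselves, so strict associativity is automatic from composition of endofunctors, and the fibration check reduces to a single pullback statement. A minor terminological point: what you describe is not the Joyal--Street construction but the Mac Lane one; the paper's approach is the one usually attributed to Joyal--Street.
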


\begin{proof}
Just as in the classical case, the proof is exactly the same as when we replace
``symmetric monoidal'' by ``monoidal'' and ``permutative'' by ``strictly associative
unital''. 

The T-category $C^\prime$ has objects which are continuous functors $E:C\r C$
together with continuous natural transformations
$$\diagram (E?)\oplus ?\rto^{\cong}& E(?\oplus ?). \enddiagram
$$
Morphisms are continuous natural isomorphisms $\diagram E\rto^\cong & E^\prime\enddiagram$
together with a commutative diagram
$$\diagram (E?)\oplus ?\dto_\cong
\rto^\cong & E(?\oplus ?)\dto^\cong\\
(E^\prime ?)\oplus ?\rto_\cong & E^\prime(?\oplus ?).
\enddiagram
$$
The functor $\Gamma:C\r C^\prime$ is (on objects)
$$X\mapsto X\oplus ?.$$
To prove that $C^\prime$ is a T-category and that $\Gamma$ is continuous, the key point
is to ge a more explicit description of the objects and morphisms of $C^\prime$.

We see that an object $E\in Obj(C^\prime)$ is determined by
the object $X=E(0)$ and a continuous choice of isomorphisms
$$\diagram X\oplus Y\rto^\cong & Z,\enddiagram$$
$$Y\in Obj(C).$$
Therefore,
$$Obj(C^\prime)=Obj(C)\times_{Map(Obj(C),Obj(C))}Map(Obj(C),Mor(C))$$
where 
$$Obj(C)\r Map(Obj(C),Obj(C))$$
is the adjoint to $\oplus$, and 
\beg{estreet*}{Map(Obj(C),Mor(C))\r Map(Obj(C),Obj(C))}
is $Map(Id,S)$. 
Similarly, morphisms $E\r E^\prime$ are determined by $E,E^\prime$ and a morphism
$$\diagram E(0)\rto^\cong & E^\prime(0).\enddiagram$$
Therefore, 
$$Mor(C^\prime)=Mor(C)\times_{Map(Obj(C),Obj(C))^2}Map(Obj(C),Mor(C))^2$$
where the Cartesian coordinates of
$$Mor(C)\r Map(Obj(C),Obj(C))^2$$
are adjoint to $S?\oplus ?$, $T?\oplus ?$. The key point of proving that $C^\prime$
is a (permutative) T-category is that \rref{estreet*} is a fibration, and hence so
are $S_{C^\prime}$, $T_{C^\prime}$. 

The continuous inverse of the functor $\Gamma$ is
$$E\mapsto E(0).$$
\end{proof}

\vspace{3mm}

\subsection{Singular vectors in Verma modules}

Consider the Verma module $V_{p,q}=V(c_{p,q},0)$ over the NS algebra (cf. \cite{gk}) where $c_{p,q}$
is given by \rref{ecpq}. It is known (\cite{ik1}) that $V_{p,q}$ has two singular vectors, one of which
is 
$$G_{-1/2}1$$
and the other, which we denote by $w$, has degree 
$$\frac{1}{2}{(p-1)(q-1)}.$$
In this subsection, we will compute some information about the singular vector $w$. Although
\cite{ik1} do compute a certain projection of $w$, their projection appears to annihilate the terms
we need, and we were not able to find another reference which would include them.

Denote 
$$V_{p,q}^{\prime}=V_{p,q}/\mathcal{A}\cdot (G_{-1/2}1)$$
where $\mathcal{A}$ is the NS algebra. Then $V_{p,q}^{\prime}$ has a basis consisting 
of vectors
\beg{esingv+}{G_{m_1}\dots G_{m_k}L_{n_1}\dots L_{n_\ell}}
where
$$\begin{array}{l}m_i\in \Z+\frac{1}{2}, n_j\in\Z,\\[2ex]
-\frac{3}{2}\geq m_1>m_2>\dots> m_k,
-2\geq n_1\geq n_2\geq\dots\geq n_\ell.
\end{array}$$
Let, for $(\alpha_0,\alpha_1,\alpha_2,\dots)$, $\alpha_i\in\N_0$,
$(\alpha_0,\alpha_1,\dots)>(\beta_0,\beta_1,...)$ if there exists an $i$ such that
$\alpha_j=\beta_j$ for $j<i$ and $\alpha_i>\beta_i$.
Let $I$ be the set of all sequences $(\alpha_0,\alpha_1,\dots)$ of non-negative integers
where for all but finitely $i$, $\alpha_i=0$.

We introduce an $I$-indexed increasing filtration on $V_{p,q}$ where
$$F_{(\alpha_0,\alpha_1,\dots)}V^{\prime}_{p,q}$$
is spanned by all elements \rref{esingv+} such that $(\beta_0,\beta_1,\dots)\leq (\alpha_0,
\alpha_1,\dots)$ where $\beta_0=k+\ell$ and $\beta_i$ is the number of $m_i$
(resp. $n_j$) equal to $-1-(i/2)$.

\vspace{3mm}

\begin{lemma}
\label{lsingv}
The projection $w^\prime\in V^{\prime}_{p,q}$ of $w$ is, up to non-zero multiple, equal to
\beg{esingv1}{G_{-5/2}G_{-3/2}L_{-2}\dots L_{-2}+\lambda L_{-2}\dots L_{-2}\;\;
\text{if $(p-1)(q-1)$ is even}
}
\beg{esingv2}{G_{-3/2}L_{-2}\dots L_{-2}\;\;\text{if $(p-1))(q-1)$ is odd}
}
plus elements of lower filtration degree, where $\lambda$ is an appropriate non-zero number.
\end{lemma}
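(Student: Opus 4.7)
My plan is to combine the classical existence of the singular vector $w \in V_{p,q}$ with a PBW-filtration analysis of its image $w' \in V'_{p,q}$. Existence of $w$ at level $N := (p-1)(q-1)/2$ is given by the NS Kac determinant formula (cf.\ \cite{ik1}); since the other singular vector lies at the much lower level $1/2$, the higher singular vector cannot lie in $\mathcal{A}\cdot G_{-1/2}\mathbf{1}$, so $w' \neq 0$. The submodule $\mathcal{A}\cdot G_{-1/2}\mathbf{1}$ is $\mathcal{A}$-stable, so $w'$ is annihilated by every positive mode $L_n$ ($n \ge 1$) and $G_r$ ($r \ge 1/2$) acting on $V'_{p,q}$.

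The next step is a combinatorial enumeration of the PBW monomials \rref{esingv+} of total degree $N$. Because every factor has index magnitude at least $3/2$ and the odd $G_{-3/2}$ appears at most once, a direct count yields: in the even case ($N = 2m$ with $(p-1)(q-1) = 4m$), the monomials of maximal total length $\beta_0 = m$ are precisely $G_{-5/2}G_{-3/2}L_{-2}^{m-2}$ and $L_{-2}^m$, while every other degree-$N$ monomial has $\beta_0 \le m-1$; in the odd case ($(p-1)(q-1) = 4m+3$), the unique monomial of maximal length $\beta_0 = m+1$ is $G_{-3/2}L_{-2}^m$. In the odd case this is already enough: the image of $w'$ in the top graded piece of $V'_{p,q}$ must be a nonzero multiple of this single monomial, which after rescaling gives \rref{esingv2}.

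In the even case, write $w' = a\,G_{-5/2}G_{-3/2}L_{-2}^{m-2}\mathbf{1} + b\,L_{-2}^m\mathbf{1} + (\text{strictly lower filtration})$. Using $[L_2, L_{-2}] = 4L_0 + c/2$ one computes $L_2\,L_{-2}^m\mathbf{1} = m[4(m-1) + c_{p,q}/2]\,L_{-2}^{m-1}\mathbf{1}$, which is nonzero on the NS discrete series; hence $L_{-2}^m\mathbf{1}$ alone cannot be singular, so $a \neq 0$, and we rescale to $a = 1$. The remaining content of the lemma is the assertion $\lambda = b \neq 0$.

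To establish $\lambda \neq 0$, apply $G_{1/2}$ to $w'$ and work modulo the stratum $\beta_0 \le m-1$ in degree $N - 1/2$ (where, by the same enumeration as above, $G_{-3/2}L_{-2}^{m-1}$ is the unique PBW monomial of maximal length $\beta_0 = m$). Using the super-anticommutators $\{G_{1/2}, G_{-5/2}\} = 2L_{-2}$, $\{G_{1/2}, G_{-3/2}\} = 2L_{-1}$, the commutator $[G_{1/2}, L_{-2}] = (3/2)G_{-3/2}$, together with $L_{-1}\mathbf{1} = G_{-1/2}^2\mathbf{1} = 0$ in $V'_{p,q}$ (and $[L_{-1}, L_{-2}] = L_{-3}$ to reduce any $L_{-1}L_{-2}^k\mathbf{1}$ to lower filtration), one obtains modulo $\beta_0 \le m-1$:
\[ G_{1/2}\cdot G_{-5/2}G_{-3/2}L_{-2}^{m-2}\mathbf{1} \equiv 2\,G_{-3/2}L_{-2}^{m-1}\mathbf{1},\qquad G_{1/2}\cdot L_{-2}^m\mathbf{1} \equiv \tfrac{3m}{2}\,G_{-3/2}L_{-2}^{m-1}\mathbf{1}. \]
Singularity $G_{1/2}w' = 0$ projected to this top piece gives $2 + (3m/2)\lambda = 0$, hence $\lambda = -4/(3m)$, nonzero since $m \ge 2$ in all valid cases. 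The main technical obstacle is the bookkeeping: one must check carefully that every cross-commutator from the two super-anticommutators and from the PBW reordering either lands in $G_{-3/2}L_{-2}^{m-1}\mathbf{1}$ with the computed coefficient or drops into $\beta_0 \le m-1$; in particular, the reduction of $L_{-1}$ terms (which arise from $\{G_{1/2}, G_{-3/2}\}$) through the identities $L_{-1} = G_{-1/2}^2$ and $[L_{-1}, L_{-2}] = L_{-3}$ must be shown to produce only lower-filtration terms, a routine but sign-sensitive calculation.
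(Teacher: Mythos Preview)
Your endgame---the $G_{1/2}$ computation forcing $\lambda\neq 0$ in the even case---is exactly the paper's final paragraph, and your bookkeeping there is correct: $G_{1/2}$ applied to a PBW monomial of length $\beta_0=k$ produces (after eliminating any $L_{-1}$ via $L_{-1}\mathbf 1=0$ and $[L_{-1},L_{-n}]$) only terms with $\beta_0\le k$, so the lower-filtration part of $w'$ cannot pollute the coefficient of $G_{-3/2}L_{-2}^{m-1}$. The relation $2+\tfrac{3m}{2}\lambda=0$ is therefore legitimate once you know $a\neq 0$.

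The genuine gap is earlier: you never rule out that $w'$ lies entirely in a strictly lower stratum of the filtration. In the odd case you assert that the image of $w'$ in the top graded piece is nonzero, but $w'\neq 0$ does not imply this. In the even case your argument for $a\neq 0$ is a non sequitur: showing that $L_{-2}^m\mathbf 1$ alone is not annihilated by $L_2$ does not preclude $a=0$, since (i) lower-filtration terms of $w'$ can contribute to the coefficient of $L_{-2}^{m-1}\mathbf 1$ under $L_2$, and (ii) you have not excluded $a=b=0$ at all. (In fact your own $G_{1/2}$ relation already gives $a=0\Leftrightarrow b=0$, so the substantive missing case is precisely $a=b=0$.)

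The paper closes this gap with a contradiction argument you do not have: assuming the actual leading monomial $q$ of $w'$ has strictly lower filtration than claimed, one locates the smallest index $i$ at which $q$ carries a factor of weight $-1-i/2$ not present in the target monomial, and applies the matching positive mode ($L_{(i-1)/2}$ or $G_{(i-1)/2}$ when $\beta_1=0$, and $L_{(i-2)/2}$ or $G_{(i-2)/2}$ when $\beta_1=1$). The resulting vector has a leading term one step higher in the $\beta_1$-coordinate of the lexicographic filtration, which cannot be cancelled by contributions from strictly lower terms of $w'$, contradicting singularity. This is the inductive mechanism that forces the leading term up to the asserted level, and it is what your proposal is missing.
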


\begin{proof}
Consider the highest filtration monomial summand $q$ of the form \rref{esingv+} of $w^\prime$. 
Assuming our statement is false, then the filtration degree of $q$ must be lower than
the filtration degree of \rref{esingv1} (resp. \rref{esingv2}). 

\vspace{3mm}
\noindent
{\bf Case 1:} $\beta_1=0$. Then let $i$ be the lowest such that $\beta_i>\alpha_i$
(this must exist by dimensional reasons). If $i-1$ is even, let
$$u=L_{(i-1)/2}w^\prime,$$
if $i-1$ is odd, let
$$u=G_{(i-1)/2}w^\prime.$$
In either case, the highest filtration degree of $u$ is $$(\beta_0,1,\beta_2,\dots, 
\beta_{i-1}, \beta_i-1,\beta_{i+1},\dots,...).$$
In particular, it cannot cancel with $L_{(i-1)/2}$ resp. $G_{(i-1)/2}$ being applied to 
lower filtration summands of $w^\prime$. This contradics $w^\prime$ being a singular vector.

\vspace{3mm}
\noindent
{\bf Case 2:} $\beta_1=1$. Let $i$ be the lowest such that $\beta_i>\alpha_i$. If $i-2$ is
even, let
$$u=L_{(i-1)/2}w^\prime,$$
if $i-2$ is odd, let
$$u=G_{(i-2)/2}w^\prime.$$
The rest of the argument is the same as in Case 1. Now in the case of $(p-1)(q-1)$ even,
note that
$$G_{1/2}(G_{-5/2}G_{-3/2}L_-2\dots L_{-2})$$
produces a summand of
$$G_{-3/2}L_{-2}\dots L_{-2},$$
which can cancel only with
$$G_{1/2}(L_{-2}\dots L_{-2}).$$
\end{proof}

\vspace{3mm}
\begin{theorem}
\label{tc2}
The quotient of $L_{p,q}$ by the sum of images of $a$ where $a$ are the coefficients
of $Y(u,z)$ at $z^{\geq 1}$ with $u\in L_{p,q}$ is generated by
\beg{ec2gen}{(L_{-2})^i, G_{-3/2}(L_{-2})^i}
with $0\leq i<(p-1)(q-1)/4$ if $p,q$ are odd, and $0\leq i<((p-1)(q-1)+1)/4$ if
$p,q$ are even (and $p-q\equiv 2\mod 4$). In particular, $L_{p,q}$ satisfies
the $C_2$ condition (cf. \cite{dlm}).
\end{theorem}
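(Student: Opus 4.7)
The plan is to establish the finite spanning set for $L_{p,q}/C_2L_{p,q}$; the $C_2$-condition then follows immediately. I proceed in two stages: first, identify a spanning set for the intermediate quotient $V_{p,q}^{\prime}/C_2V_{p,q}^{\prime}$; second, use the second singular vector $w$ (via Lemma \ref{lsingv}) to extract the relations that truncate this spanning set once we further quotient by $\mathcal{A}w^\prime$ to reach $L_{p,q}$.

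For the first stage, I start from the PBW basis of $V_{p,q}^{\prime}$ displayed just above Lemma \ref{lsingv}. The standard vertex-algebra identity $u_{(n)}v\in C_2V$ for all $n\leq -2$, applied to the conformal and superconformal vectors, puts $L_{-n}v$ ($n\geq 3$) and $G_{-r}v$ ($r\geq 5/2$) into $C_2V_{p,q}^{\prime}$ for every $v$. The subspace $C_2V_{p,q}^{\prime}$ is stable under left multiplication by $L_{-2}$ and $G_{-3/2}$ (a consequence of the Borcherds commutator identity, or equivalently of the commutative superalgebra structure on the associated graded), so a commutator-sorting argument reduces any PBW monomial to one involving only $L_{-2}$ and $G_{-3/2}$. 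Since $(G_{-3/2})^2 = \tfrac12\{G_{-3/2},G_{-3/2}\} = L_{-3}\in C_2V_{p,q}^{\prime}$ and $[L_{-2},G_{-3/2}] = \tfrac12 G_{-7/2}\in C_2V_{p,q}^{\prime}$, this yields the spanning set $\{(L_{-2})^i,\,G_{-3/2}(L_{-2})^i\}_{i\geq 0}$ for $V_{p,q}^{\prime}/C_2V_{p,q}^{\prime}$.

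For the second stage, Lemma \ref{lsingv} gives the top-filtration term of $w^\prime$, and $L_0$-weight conservation together with the first stage forces the ``lower filtration'' terms to reduce modulo $C_2V_{p,q}^{\prime}$ to linear combinations of $(L_{-2})^i, G_{-3/2}(L_{-2})^i$ with strictly smaller $L_0$-weight. When $(p-1)(q-1)$ is even, the summand $G_{-5/2}G_{-3/2}(L_{-2})^k$ lies in $C_2V_{p,q}^{\prime}$, so the image of $w^\prime$ is $\lambda(L_{-2})^\ell + (\text{lower})$ with $\ell = (p-1)(q-1)/4$ and $\lambda\neq 0$; acting on $w^\prime$ by $(L_{-2})^j$ and $G_{-3/2}(L_{-2})^j$ then gives the analogous reductions of $(L_{-2})^{\ell+j}$ and $G_{-3/2}(L_{-2})^{\ell+j}$, truncating the spanning set as asserted. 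When $(p-1)(q-1)$ is odd, the top term is $G_{-3/2}(L_{-2})^k$ with $k=((p-1)(q-1)-3)/4$; acting by $(L_{-2})^j$ and $G_{-3/2}(L_{-2})^j$ cuts off $G_{-3/2}(L_{-2})^{k+j}$. For the missing $(L_{-2})^{k+1}$ relation, I consider $G_{-1/2}w^\prime\in\mathcal{A}w^\prime$: by $\{G_{-1/2},G_{-3/2}\} = 2L_{-2}$,
\[
G_{-1/2}\cdot G_{-3/2}(L_{-2})^k = 2(L_{-2})^{k+1} - G_{-3/2}G_{-1/2}(L_{-2})^k,
\]
and the crucial fact $G_{-1/2}(L_{-2})^k\equiv 0\pmod{C_2V_{p,q}^{\prime}}$ -- obtained by commuting $G_{-1/2}$ rightward past the $L_{-2}$s, producing only $G_{-5/2}$-terms (swallowed by $C_2$) and a residual $(L_{-2})^kG_{-1/2}\mathbf{1}$ that vanishes in $V_{p,q}^{\prime}$ since $G_{-1/2}\mathbf{1}$ is the first singular vector -- makes $G_{-1/2}w^\prime \equiv 2(L_{-2})^{k+1} + (\text{lower})$, giving the required reduction; iteration in $j$ handles $(L_{-2})^{k+1+j}$.

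The main technical obstacle is controlling the lower-filtration tail of $w^\prime$: after reduction modulo $C_2V_{p,q}^{\prime}$, it must produce only $(L_{-2})^i$ and $G_{-3/2}(L_{-2})^i$ with $i$ strictly smaller than the top index, so as not to interfere with the leading relation. This is guaranteed by $L_0$-weight preservation and the reductions of the first stage, but requires careful bookkeeping, especially in the even-$p,q$ case where the $(L_{-2})^{k+1}$ relation depends essentially on the already-established vanishing of $G_{-1/2}(L_{-2})^k$ in the $C_2$-quotient. The finite spanning set then implies the $C_2$-condition.
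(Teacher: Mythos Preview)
Your approach is essentially the paper's, but with a cleaner first stage: by using that $C_2$ is an ideal for the $(-1)$-product (so stable under $L_{-2}=\omega_{(-1)}$ and $G_{-3/2}=\tau_{(-1)}$), you eliminate the $G_{-3/2}G_{-5/2}(L_{-2})^i$ terms immediately, whereas the paper, working ``modulo lower filtration degrees'' and appealing to Lemma~3.8 of \cite{dlm}, keeps them in its intermediate spanning set. Your explicit treatment of the odd-parity case via $G_{-1/2}w'$ is also more detailed than the paper's terse reference.

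There is one misstatement you should fix. You write that the lower-filtration summands of $w'$ ``reduce modulo $C_2V'_{p,q}$ to linear combinations of $(L_{-2})^i,\,G_{-3/2}(L_{-2})^i$ with strictly smaller $L_0$-weight''. This cannot be right: $w'$ is $L_0$-homogeneous, so every summand has the \emph{same} $L_0$-weight as the leading term. The correct (and simpler) argument is that the lower-filtration summands lie in $C_2V'_{p,q}$ outright. Indeed, your first-stage analysis shows that a PBW monomial $G_{m_1}\cdots G_{m_k}L_{n_1}\cdots L_{n_\ell}\mathbf{1}$ is in $C_2$ unless every $n_j=-2$ and either $k=0$ or $k=1$ with $m_1=-3/2$; by $L_0$-weight, the unique such monomial of weight $(p-1)(q-1)/2$ is exactly the displayed top term of Lemma~\ref{lsingv} (namely $G_{-3/2}(L_{-2})^m$ in the odd case, $(L_{-2})^\ell$ in the even case after noting $G_{-5/2}G_{-3/2}(L_{-2})^{\ell-2}\in C_2$). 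So the image of $w'$ in $V'_{p,q}/C_2V'_{p,q}$ is precisely a nonzero multiple of that single spanning element, and your second-stage truncation goes through. The same observation (that $C_2$ is stable under $a_{(0)}$, since $[a_{(0)},u_{(-2)}]=(a_{(0)}u)_{(-2)}$) is what makes your $G_{-1/2}w'$ step work cleanly: you may pass to the $C_2$-quotient before applying $G_{-1/2}$.
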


\begin{proof}
Similar to \cite{dlm}. Modulo lower filtration degrees, all elements \rref{esingv+} are
in the submodule $C_2L_{p,q}$ generated by $(coeff_{z^{\geq 1}}Y(?,z))L_{p,q}$
unless $n_\ell=-2$ (or $\ell=0$) and either $k=0$, or
$k=1$ and $m_1=-3/2$, or $k=2$ and $m_2=-5/2$. By Lemma \ref{lsingv}, and Lemma 3.8 of \cite{dlm}, then,
the listed elements generate the quotient $L_{p,q}/C_2L_{p,q}$.
\end{proof}

\vspace{10mm}

\end{document}